\theoremstyle{plain}
\newtheorem{thm}{Theorem}[subsection]
\newtheorem{lem}[thm]{Lemma}
\newtheorem{prop}[thm]{Proposition}
\theoremstyle{definition}
\newtheorem{defn}[thm]{Definition}
\newtheorem{rem}[thm]{Remark}
\numberwithin{equation}{subsection}
\begin{document}
  
\title[Weyl  Modules  and  Weyl  Functors  for  hyper--map algebras]{Weyl  Modules  and  Weyl  Functors \\  for  hyper--map algebras}  
\author{Angelo Bianchi} 
\address{Universidade Federal de S\~ao Paulo - UNIFESP - Department of Science and Technology, Brazil}
\email{acbianchi@unifesp.br}
\thanks{A.B. is partially supported by CNPq (462315/2014-2) and FAPESP (15/22040-0 and 18/23690-6).}

\author{Samuel Chamberlin}
\address{Department of Mathematics and Statistics\\
Park University\\
Parkville, MO 64152}
\email{samuel.chamberlin@park.edu}

\begin{abstract}
We investigate the representations of the hyperalgebras associated to the map algebras $\mathfrak g\otimes \mathcal A$, where $\mathfrak g$ is any finite-dimensional complex simple Lie algebra and $\mathcal A$ is any associative commutative unitary algebra with a multiplicatively closed basis. We consider the natural definition of the local and global Weyl modules, and the Weyl functor for these algebras. Under certain conditions, we prove that these modules satisfy certain universal properties, and we also give conditions for the local or global Weyl modules to be finite-dimensional or finitely generated, respectively.

\end{abstract}

\maketitle

\section*{Introduction}

Let $\mathfrak g$ be a finite-dimensional Lie algebra and $\mathcal X$ be an affine scheme (for instance, an algebraic variety), both defined over a field $\mathbb K$, often assumed to be algebraically closed and with characteristic zero. The Map algebra is the Lie algebra of regular maps from $\mathcal X$ to $\mathfrak g$. Denoting by $\mathcal A$ the coordinate ring of $\mathcal X$, the map algebra can also be realized as the Lie algebra $\mathfrak g\otimes \mathcal A$. These algebras generalize the loop and current algebras, which play an important role in the theory of affine Kac-Moody Lie algebras. The representation theory of the map algebras is an extremely active area of research. Recently, there has been an intensive study of the finite-dimensional representation theory of the map algebras $\mathfrak g\otimes \mathcal A$, where $\mathfrak g$ is a finite-dimensional simple Lie algebra over the complex numbers and $\mathcal A$ is an associative commutative unitary algebra.  

Parallel to this, the hyperalgebras in positive characteristic are constructed by considering an integral form of the universal enveloping algebra of a Lie algebra and then tensoring this form over $\mathbb Z$ with an arbitrary field $\mathbb F$, which we must assume is algebraically closed. In the case of a complex simple Lie algebra $\mathfrak g$, Kostant \cite{K2} constructed such an integral form of $U(\mathfrak g)$. The corresponding hyperalgebra is usually denoted by $U_\mathbb F(\mathfrak g)$. The affine analogues of Kostant's form were constructed by Garland \cite{G} in the non-twisted affine Kac-Moody algebra case and by Mitzman \cite{M} in a more general way for all affine Kac-Moody algebras. Suitable integral forms for the map algebras were formulated by the second author in \cite{C}.

The global Weyl Modules introduced via generators and relations in the context of affine Lie algebras in \cite{CP} are parameterized by a dominant integral weight $\lambda$ of a complex semisimple Lie algebra $\mathfrak g$, denoted by $W(\lambda)$. They are infinite--dimensional if $\lambda\ne 0$ and $W(\lambda)$ is also a right module for a polynomial algebra $A_\lambda$ specifically constructed. Once the global Weyl modules were defined, the local Weyl modules were obtained by tensoring the global Weyl modules with irreducible modules for $A_\lambda$ or, equivalently, they can be given via generators and relations. After that, in \cite{FL} a more general case was considered where $\mathcal A$ was the coordinate ring of an algebraic variety and partial results analogous to those in \cite{CP} were obtained. Finally, in \cite{CFK} (motivated by \cite{CG}), the authors took a categorical approach to these modules for the map algebras and it was shown that there is a natural definition of the local and global Weyl modules via homological properties. This leads to the so called Weyl functor from the category of left modules of a commutative algebra to the category of modules for a Lie algebra, similarly to \cite{CP}. Such a functor was not considered in \cite{FL}. An advantage of this approach (considering the commutative algebra  $A_\lambda$) is the study of the structure of the global Weyl modules $W(\lambda)$, which are free $A_\lambda$-modules (see \cite{CP} for $\mathfrak{sl}_2$, \cite{CL} for $\mathfrak{sl}_{r+1}$, \cite{FoL} for simply laced algebras, and the general case by passing to the quantum setting using \cite{Kash} and \cite{BN}). 

Furthermore, in a zero-characteristic base field, the Weyl modules were widely considered in several contexts: for twisted affine Lie algebras in \cite{FMS,CIK}, for the general context of equivariant map algebras in \cite{FMS2,FKKS}, for the Lie superalgebras in \cite{BCM,CLS,FM} as well as some considerations involving Levi subalgebras in \cite{F3}, and for the hyperspecial current algebras in \cite{CIK}. There is an application to invariant theory in \cite{CL} and a few others in \cite{CKO,N}.

The corresponding hyperalgebras in the (twisted and un-twisted) affine case and their finite-dimensional representations, with an emphasis on the local Weyl modules, were studied by Moura and Jakelic in \cite{JM} and by Moura and the first author in \cite{BM}. 

In these papers the authors considered the positive characteristic analogues of local Weyl modules and explored the universal properties of these modules. Recently, the multicurrent and multiloop cases were considered by the authors in \cite{BiC}. However, there is no mention of global Weyl modules in all these papers. The authors only considered the local Weyl modules defined via generators and relations. 

Basically, many of the results in characteristic zero remain valid in positive characteristic, but, unfortunately, it is unknown the adaptation of all these constructions from the aforementioned papers (except those already in the hyperalgebra context) to the positive characteristic setting, for instance a few results in \cite{CP,FL,CL,FoL} and \cite[Sections 5 and 6]{CFK}. This difficulty already appeared with many other results first proved in characteristic zero and then generalized to positive characteristic setting by using very different tools as in the pairs of papers \cite{CP,JM}, \cite{CFS,BM}, and \cite{FoL,BMM}.

The results of this work show that the Weyl functor and the Weyl modules defined in this paper satisfy properties similar to the ones satisfied by Weyl functors defined in the characteristic zero setting. 

Section \ref{preliminaries} is mostly dedicated to reviewing all preliminaries on Lie algebras, details of the construction of hyperalgebras, some useful straightening identities, as well as fixing the basic notation of the paper. Section \ref{categories} is dedicated to reviewing the relevant facts about the finite-dimensional modules for hyperalgebras $U_\mathbb F(\mathfrak g)$ and to define the main categories of objects for this paper. We define the Weyl functor and global and local Weyl modules in this section.  Our main results on the structure of these modules are in Subsection \ref{fd}.

\tableofcontents

\section{Preliminaries} \label{preliminaries}

Throughout this work, $\mathbb{C}$ denotes the set of complex numbers and $\mathbb{Z}$, $\mathbb{Z}_+$ and $\mathbb N$ are the sets of integers, non-negative integers, and positive integers respectively.

\subsection{Simple Lie algebras} 

Let $\mathfrak g$ be a finite-dimensional complex simple Lie algebra and $I$ the set of vertices of the associated Dynkin diagram. Fix a Cartan subalgebra $\mathfrak h$ of $\mathfrak g$ and let $R$ denote the corresponding set of roots. Let $\{\alpha_i:{i\in I}\}$ (respectively, $\{ \omega_i :{i\in I}\}$) denote the simple roots (respectively, fundamental weights). Set $Q=\bigoplus_{i\in I} \mathbb Z \alpha_i$, $Q^+=\bigoplus_{i\in I} \mathbb Z_+ \alpha_i$, $P=\bigoplus_{i\in I} \mathbb Z \omega_i$, $P^+=\bigoplus_{i\in I} \mathbb Z_+ \omega_i$, and $R^+=R\cap Q^+$. We denote the Weyl group of $\mathfrak g$ by $\mathcal W$ and its longest element by $w_0$.

Let $\mathcal{C}:=\{x_\alpha^{\pm},h_i:\alpha\in R^+,\ i\in I\}$ be a Chevalley basis of $\mathfrak g$ and set
$$x_i^\pm:=x_{\alpha_i}^{\pm}, h_\alpha:=[x_\alpha^+,x_\alpha^-], \text{ and } h_i:=h_{\alpha_i}.$$ For each $\alpha\in R^+$, the subalgebra of $\mathfrak g$ spanned by $\{x_{\alpha}^\pm,h_\alpha\}$ is naturally isomorphic to $\mathfrak{sl}_2$. We have a triangular decomposition $\mathfrak g=\mathfrak n^-\oplus \mathfrak h\oplus \mathfrak n^+$ with 
$$\mathfrak n^\pm:=\bigoplus_{\alpha\in R^+}\mathbb{C} x_\alpha^\pm.$$

\subsection{Map algebras}
Fix $\mathcal A$ a commutative $\mathbb C$-associative algebra with unity over $\mathbb C$ and suppose that $\mathcal A$ has a multiplicatively closed basis $\mathcal B$ (see \cite{BGRS} for details).

The map algebra of a Lie algebra $\mathfrak a$ over $\mathbb C$ is the $\mathbb C$-vector space $\mathfrak a_\mathcal A := \mathfrak a\otimes  \mathcal A$, with Lie bracket given by linearly extending the bracket
$$[g\otimes a, g'\otimes b]=[g,g']_{\mathfrak a}\otimes ab,\ g,g'\in\mathfrak a,\ a,b\in \mathcal A,$$
where $[\cdot,\cdot]_{\mathfrak a}$ is the Lie bracket of $\mathfrak a$.

The Lie algebra $\mathfrak a$ can be embedded into $\mathfrak a_\mathcal A $ as $\mathfrak a\otimes 1$ and, if $\mathfrak s$ is a subalgebra of $\mathfrak a$, then $\mathfrak s \otimes \mathcal A =:\mathfrak s_\mathcal A$ is naturally a subalgebra of $\mathfrak a_\mathcal A$. In particular, we have a decomposition
$$\mathfrak g_\mathcal A = \mathfrak n^+_\mathcal A\oplus \mathfrak h_\mathcal A \oplus \mathfrak n^-_\mathcal A$$
where $\mathfrak h_\mathcal A$ is an abelian subalgebra of $\mathfrak g_\mathcal A$.

\begin{rem} In the case that $\mathcal A$ is the coordinate ring of some algebraic variety $X$, it follows that $\mathfrak{a}_\mathcal A$ is isomorphic to the Lie algebra of regular maps $X\to\mathfrak a$.
\end{rem}

\subsection{Universal enveloping algebras}

For a Lie algebra $\mathfrak a$, the corresponding universal enveloping algebra of $\mathfrak a$ will be denoted by $U(\mathfrak a)$. The Poincar\'e--Birkhoff--Witt (PBW) Theorem implies that 
\begin{eqnarray*}
     U(\mathfrak g)&\cong& U(\mathfrak n^-)\otimes U(\mathfrak h)\otimes U(\mathfrak n^+)\\
     U(\mathfrak g_\mathcal A)&\cong& U(\mathfrak n^-_\mathcal A)\otimes U(\mathfrak h_\mathcal A)\otimes U(\mathfrak n^+_\mathcal A).
\end{eqnarray*}

The assignments $\Delta:\mathfrak g_\mathcal A\to U(\mathfrak g_\mathcal A)\otimes U(\mathfrak g_\mathcal A)$ where $x\mapsto x\otimes 1+1\otimes x$, $S:\mathfrak g_\mathcal A\to \mathfrak g_\mathcal A$ where $x\mapsto-x$, and $\epsilon:\mathfrak g_\mathcal A\to\mathbb C$ where $x\mapsto0$, can be  uniquely extended so that $U(\mathfrak g_\mathcal A)$ becomes a Hopf algebra with comultiplication $\Delta$, antipode $S$, and counit $\epsilon$.

The augmentation ideal (i.e., the kernel of $\epsilon$) for a Hopf algebra $H$ is denoted $H^0$.

\subsection{Integral forms and hyperalgebras} \label{hyperalgebras}

Given $\alpha\in R$ and $a\in \mathcal A\setminus\{0\}$, consider the power series with coefficients in $U(\mathfrak h_\mathcal A)$ given by
\begin{equation*}  
	\Lambda_{\alpha,a}(u) = \sum_{r=0}^\infty \Lambda_{\alpha,a,r}u^r = \exp \left(-\sum_{s=1}^{\infty}\frac{h_{\alpha}\otimes a^{s}}{s} u^{s}\right).
\end{equation*}

Note that $\Lambda_{\alpha,f,r}$ is a polynomial in $\left(h_\alpha\otimes f^j\right)$ for $j\in\{1,\ldots,r\}$. For $i\in I$, we simply write $\Lambda_{i,a,r}$ in place of $\Lambda_{\alpha_i,a,r}$.

The following lemma shows that elements of the form $\Lambda_{\alpha,a^k,r}$, $k\ge 2$, are linear combinations of products of elements of the form $\Lambda_{\alpha,a,s}$.
\begin{lem}\label{a^kintermsofa}
    Let $a\in\mathcal B$, $\alpha\in R$, and $r,k\in\mathbb N$. Then
    \begin{equation*}
	    \Lambda_{\alpha,a^k,r}=k\Lambda_{\alpha,a,kr}+\sum_{(\mathbf s,\mathbf n)}m_{\mathbf s,\mathbf n}\Lambda_{\alpha,a,s_1}^{n_1}\dots\Lambda_{\alpha,a,s_l}^{n_l}
	\end{equation*}
    where $m_{\mathbf s,\mathbf n}\in\mathbb Z$ and the sum is over all $\mathbf s,\mathbf n\in\mathbb N^l$ for some $l\in\mathbb N$ such that $s_i\neq s_j$, $l\sum n_js_j>1$, and $\sum n_js_j=rk$.  
\end{lem}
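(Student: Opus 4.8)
The plan is to prove the asserted identity first as a universal identity among formal power series and then to specialize. Since $\mathfrak h_\mathcal A$ is abelian, all the elements $h_\alpha\otimes a^s$ commute, so $\Lambda_{\alpha,a}(u)$ and $\Lambda_{\alpha,a^k}(u)$ live in the commutative ring $U(\mathfrak h_\mathcal A)[[u]]$ and there are no ordering issues. The starting point is to recognize the generating function
$$\Lambda_{\alpha,a}(u)=\exp\Bigl(-\sum_{s\ge 1}\frac{h_\alpha\otimes a^s}{s}u^s\Bigr)$$
as the specialization, under $h_\alpha\otimes a^s\mapsto p_s=\sum_i x_i^s$, of the symmetric-function identity $\prod_i(1-x_iu)=\sum_r(-1)^re_r\,u^r$. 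Thus $\Lambda_{\alpha,a,r}$ is the image of the signed elementary symmetric function $(-1)^re_r$, while the passage $a\rightsquigarrow a^k$ corresponds to $p_s\mapsto p_{ks}$, i.e. to the substitution $x_i\rightsquigarrow x_i^k$; hence $\Lambda_{\alpha,a^k,r}$ is the image of $(-1)^re_r(x_1^k,x_2^k,\dots)$.

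Granting this dictionary, the first key step is integrality. The function $e_r(x_1^k,x_2^k,\dots)$ is a symmetric function with integer coefficients, and the elementary symmetric functions $e_1,e_2,\dots$ are a set of free polynomial generators of the ring $\Lambda_{\mathbb Z}$ of symmetric functions over $\mathbb Z$. Therefore $e_r(x^k)$ can be written as an \emph{integer} polynomial in $e_1,e_2,\dots$; translating the signs back through the dictionary expresses $\Lambda_{\alpha,a^k,r}$ as an integer combination of monomials $\Lambda_{\alpha,a,s_1}^{n_1}\cdots\Lambda_{\alpha,a,s_l}^{n_l}$, which is exactly the shape of the claimed formula with $m_{\mathbf s,\mathbf n}\in\mathbb Z$.

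The second key step isolates the stated leading term and the homogeneity condition. Assigning $x_i$ degree $1$ makes $e_j$ homogeneous of degree $j$ and $e_r(x^k)$ homogeneous of degree $rk$, which forces $\sum_j n_js_j=rk$ in every monomial that occurs. To pin down the coefficient of the single generator $\Lambda_{\alpha,a,kr}$, I would pass (over $\mathbb Q$) to the indecomposable quotient $\Lambda_{\mathbb Q}^+/(\Lambda_{\mathbb Q}^+)^2$, where Newton's identity gives $p_s\equiv(-1)^{s-1}s\,e_s$ modulo decomposables. Hence $e_r(x^k)\equiv\frac{(-1)^{r-1}}{r}p_r(x^k)=\frac{(-1)^{r-1}}{r}p_{rk}\equiv(-1)^{r+rk}k\,e_{rk}$, and carrying the signs through the dictionary yields $\Lambda_{\alpha,a^k,r}\equiv k\,\Lambda_{\alpha,a,kr}$ modulo products of at least two factors. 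Together with the integrality from the previous step, this fixes the leading coefficient to the integer $k$ and shows that every remaining monomial is genuinely decomposable, matching the constraints imposed on the index set $(\mathbf s,\mathbf n)$.

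Finally, I would transport the formal identity to $U(\mathfrak h_\mathcal A)$ by applying the algebra homomorphism $\mathbb Z[p_1,p_2,\dots]\to U(\mathfrak h_\mathcal A)$, $p_s\mapsto h_\alpha\otimes a^s$. The main obstacle is precisely the integrality of the coefficients $m_{\mathbf s,\mathbf n}$: both the defining exponential and Newton's recursion introduce denominators, so a direct coefficient extraction would only give rational coefficients a priori, and the cancellation of denominators is not visible term by term. The symmetric-function viewpoint is what makes this transparent, since $e_r(x^k)$ is manifestly integral and the $e_j$ form a $\mathbb Z$-basis of polynomial generators. A secondary point to be careful about is that the elements $h_\alpha\otimes a^s$ need not be algebraically independent in $U(\mathfrak h_\mathcal A)$ (the powers $a^s$ may satisfy relations in $\mathcal A$); this is exactly why the identity must first be established universally, with $p_1,p_2,\dots$ independent, and only afterwards specialized. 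An alternative, more computational route uses the root-of-unity product $\prod_{\zeta^k=1}\Lambda_{\alpha,a}(\zeta v)=\Lambda_{\alpha,a^k}(v^k)$ and extracts coefficients directly, but there the integrality of $m_{\mathbf s,\mathbf n}$ is considerably harder to see.
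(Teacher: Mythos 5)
Your proof is correct, but it takes a genuinely different route from the paper's. The paper's entire proof is a citation: the identity is proved for $\mathcal A=\mathbb C[t,t^{-1}]$ and $a=t$ as part of \cite[Lemma 5.11]{G}, and it is then transported to general $\mathcal A$ by observing that it is a universal polynomial identity in the elements $h_\alpha\otimes t^s$, so one may simply replace $t$ by $a$ --- exactly the universality/specialization point you make at the end of your argument, except that the paper outsources the universal computation to Garland while you carry it out yourself. Your symmetric-function dictionary ($p_s\mapsto h_\alpha\otimes a^s$, so that $(-1)^re_r\mapsto\Lambda_{\alpha,a,r}$ and $(-1)^re_r(x_1^k,x_2^k,\dots)\mapsto\Lambda_{\alpha,a^k,r}$) is sound; integrality of the $m_{\mathbf s,\mathbf n}$ follows because the $e_j$ freely generate the ring of symmetric functions over $\mathbb Z$, homogeneity forces $\sum_j n_js_j=rk$, and your Newton-identity computation modulo decomposables does produce the leading coefficient $k$ with the right signs: $e_r(x^k)\equiv(-1)^{r+rk}k\,e_{rk}$ gives $(-1)^re_r(x^k)\equiv(-1)^{rk}k\,e_{rk}$, whose image is $k\Lambda_{\alpha,a,kr}$. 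What your approach buys is a self-contained proof in which the one delicate point --- integrality of the coefficients --- is manifest; what the paper's approach buys is brevity, at the cost of sending the reader to Garland's computation. One wording correction: the identity you transport lives in $\mathbb Z[e_1,e_2,\dots]$, not in $\mathbb Z[p_1,p_2,\dots]$ (the $e_r$ are \emph{not} integer polynomials in the power sums), so the specialization map should be taken as the homomorphism $\mathbb Q[p_1,p_2,\dots]\to U(\mathfrak h_\mathcal A)$, $p_s\mapsto h_\alpha\otimes a^s$ --- available since $U(\mathfrak h_\mathcal A)$ is a $\mathbb C$-algebra and the $p_s$ are free generators over $\mathbb Q$ --- under which the integral identity, valid in $\mathbb Z[e_1,e_2,\dots]\subset\mathbb Q[p_1,p_2,\dots]$, is pushed forward to $U(\mathfrak h_\mathcal A)$.
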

\begin{proof} The lemma is proven for $A=\mathbb C [t,t^{-1}]$ and $a=t$ as part of \cite[Lemma 5.11]{G}. We extend this to the current setting by replacing $t$ with $a$.
\end{proof}

The pure tensors in $\mathcal C\otimes\mathcal B$ form a basis for $\mathfrak g_\mathcal A$, which we shall denote by $\mathbb B$. Given an order on $\mathbb B$ and a PBW monomial according to this order, we construct an ordered monomial in the elements of
$$\mathcal M(\mathcal A)=\left\{(x^\pm_{\alpha}\otimes b)^{(k)}, \ \Lambda_{i,c,r}, \ \binom{h_{i}\otimes1}{k}\  \bigg|\ \alpha \in R^+,\ i\in I,\ b,c \in \mathcal B,\ c\ne 1,\ k,r\in \mathbb N\right\},$$ 
where 
$$(x^\pm_{\alpha}\otimes b)^{(k)}=\frac{(x^\pm_{\alpha}\otimes b)^k}{k!} \qquad \text{and} \qquad \binom{h_i}{k}=\frac{h_i(h_i-1)\dots(h_i-k+1)}{k!},$$
through the correspondence 
$$(x^\pm_{\alpha}\otimes b)^k \leftrightarrow (x^\pm_{\alpha}\otimes b)^{(k)}, \quad (h_{i}\otimes1)^k \leftrightarrow \binom{h_{i}\otimes1}{k}, \quad \text{ and } \quad (h_{i}\otimes c)^{r} \leftrightarrow\Lambda_{i,c,r}.$$

By using a similar correspondence we consider monomials in $U(\mathfrak g)$ formed by elements of
$$\mathcal M=\left\{ (x_{\alpha}^\pm)^{(k)},\tbinom{h_i}{k} \mid  \alpha\in R^+,\ i\in I, \ k\in\mathbb N\right\}.$$
It is clear that we have a natural inclusion $\mathcal M\subset \mathcal M(\mathcal A)$ and the set of ordered monomials thus constructed are bases of $U(\mathfrak g)$ and $U(\mathfrak g_\mathcal A)$, respectively.

Let  $ U_{\mathbb Z}(\mathfrak g)\subseteq  U(\mathfrak g)$ and $ U_{\mathbb Z}(\mathfrak g_\mathcal A)\subseteq  U(\mathfrak g_\mathcal A)$ be the $\mathbb Z$--subalgebras generated respectively by
$$\{(x_{\alpha}^\pm)^{(k)}\mid\alpha\in R^+,k\in\mathbb N\}\ \text{ and }\ \{(x^\pm_{\alpha}\otimes b)^{(k)}\mid\alpha\in R^+,b\in\mathcal B, k\in\mathbb N\}.$$

We have the following fundamental theorem which was proved in \cite{K2} for $ U_{\mathbb Z}(\mathfrak g)$ and in \cite{BC} for $ U_{\mathbb Z}(\mathfrak g_\mathcal A)$ (see also \cite{G,M} for other specific cases of these theorems).

\begin{thm}\label{gAforms}
    The subalgebra $U _{\mathbb Z}(\mathfrak g)$ (resp., $U _{\mathbb Z}(\mathfrak g_\mathcal A)$) is a free $\mathbb Z$-module and the set of all the ordered monomials constructed from $\mathcal M$ (resp., $\mathcal M(\mathcal A)$) is a $\mathbb Z$-basis of $U _{\mathbb Z}(\mathfrak g)$ (resp., $U _{\mathbb Z}(\mathfrak g_\mathcal A)$).  \hfill \qedsymbol
\end{thm}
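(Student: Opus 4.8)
The plan is to reduce everything to exhibiting the ordered monomials as a $\mathbb Z$-basis, since freeness of $U_{\mathbb Z}(\mathfrak g_\mathcal A)$ as a $\mathbb Z$-module follows at once from the existence of such a basis. The two assertions have the same shape, and Kostant's classical case $U_{\mathbb Z}(\mathfrak g)$ is the starting point, so I focus on $U_{\mathbb Z}(\mathfrak g_\mathcal A)$. Write $M$ for the $\mathbb Z$-span of the ordered monomials built from $\mathcal M(\mathcal A)$. Linear independence is free of charge: these monomials form part of the $\mathbb C$-basis of $U(\mathfrak g_\mathcal A)$ recorded above, so they are $\mathbb C$-linearly independent and a fortiori $\mathbb Z$-linearly independent. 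Hence the entire content is the equality $M=U_{\mathbb Z}(\mathfrak g_\mathcal A)$, which I would prove by two inclusions.

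For $M\subseteq U_{\mathbb Z}(\mathfrak g_\mathcal A)$, each divided power $(x^\pm_\alpha\otimes b)^{(k)}$ is by definition a generator, so it remains to place the Cartan monomials $\binom{h_i\otimes 1}{k}$ and $\Lambda_{i,c,r}$ inside $U_{\mathbb Z}(\mathfrak g_\mathcal A)$. For the former I would use the $\mathfrak{sl}_2$ identity expressing $(x^+_i\otimes 1)^{(k)}(x^-_i\otimes 1)^{(k)}$ as $\binom{h_i}{k}$ modulo terms already known to be integral, and induct on $k$. For the latter I would use a Garland-type computation, e.g.\ that an appropriate product of the form $(x^+_\alpha\otimes 1)^{(s)}(x^-_\alpha\otimes c^{s})^{(s)}$ has $\Lambda_{\alpha,c,r}$ appearing among its top Cartan components; here Lemma~\ref{a^kintermsofa} is exactly what is needed to keep the powers $c^{k}$ that arise within the $\mathbb Z$-span of the admissible $\Lambda_{\alpha,c,s}$.

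The reverse inclusion $U_{\mathbb Z}(\mathfrak g_\mathcal A)\subseteq M$ is the substantial direction. Since $M$ contains $1$ and all the algebra generators, it suffices to show that $M$ is closed under multiplication; by the triangular decomposition $U(\mathfrak g_\mathcal A)\cong U(\mathfrak n^-_\mathcal A)\otimes U(\mathfrak h_\mathcal A)\otimes U(\mathfrak n^+_\mathcal A)$ this reduces to straightening an arbitrary product of generators into the prescribed order and checking that the resulting structure constants are integers. The recurring moves are: the $\mathfrak{sl}_2$ straightening of $(x^+_\alpha\otimes b)^{(m)}(x^-_\alpha\otimes b)^{(n)}$ into a sum of ordered terms whose Cartan factors are integral polynomials in the $\Lambda$'s and the $\binom{h}{\,\cdot\,}$'s; the commutation of divided powers of root vectors attached to distinct roots, which introduces divided powers for root sums weighted by the integral Chevalley structure constants; the reduction, via Lemma~\ref{a^kintermsofa}, of any $\Lambda_{\alpha,b^{k},r}$ produced along the way back to products of the generators $\Lambda_{\alpha,b,s}$; and the commutation of the Cartan elements past the root vectors, again with integer coefficients.

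The main obstacle is precisely the integrality of all these structure constants. The divided powers are engineered so that the factorials cancel, but checking this cancellation systematically — and, crucially, controlling the products $b^{k}$ of basis elements of $\mathcal B$ that are created whenever one commutes tensors carrying different powers of a given $b\in\mathcal B$ — is the technical heart of the argument. Lemma~\ref{a^kintermsofa} is the device that tames this phenomenon, reducing the $\mathcal A$-computations to the single-generator (loop algebra) identities already established by Garland~\cite{G} and Mitzman~\cite{M}, after which the bookkeeping mirrors Kostant's treatment of $U_{\mathbb Z}(\mathfrak g)$.
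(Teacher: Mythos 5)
Your outline is correct and is essentially the route the paper relies on: the paper itself gives no proof of Theorem \ref{gAforms} but quotes it from Kostant \cite{K2} (for $U_{\mathbb Z}(\mathfrak g)$) and Bagci--Chamberlin \cite{BC} (for $U_{\mathbb Z}(\mathfrak g_\mathcal A)$, cf.\ also \cite{G,M}), and those references argue exactly as you sketch --- linear independence inherited from the complex PBW-type basis, with the real content being closure of the $\mathbb Z$-span under multiplication via integrality of the straightening constants, i.e.\ via Lemmas \ref{basicrel} and \ref{commutrels}, which the paper explicitly flags as ``essential tools in the proofs of Theorem \ref{gAforms}.'' One small repair to your sketch: to place $\Lambda_{\alpha,c,r}$ in $U_{\mathbb Z}(\mathfrak g_\mathcal A)$ one should apply Lemma \ref{basicrel} with $a=1$, $b=c$, $s=r$, which gives $(x^+_\alpha\otimes 1)^{(r)}(x^-_\alpha\otimes c)^{(r)}\equiv(-1)^r\Lambda_{\alpha,c,r}$ directly, whereas your product $(x^+_\alpha\otimes 1)^{(s)}(x^-_\alpha\otimes c^{s})^{(s)}$ produces $\Lambda_{\alpha,c^{s},s}$, and the paper's lemma expressing $\Lambda_{\alpha,a^k,r}$ as $k\Lambda_{\alpha,a,kr}$ plus products goes the wrong way for recovering $\Lambda_{\alpha,c,r}$ from it (one would have to divide by $k$, which is not integral).
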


Particularly, if $\mathfrak a\in\{\mathfrak g,\mathfrak n^\pm,\mathfrak h,\mathfrak g_\mathcal A,\mathfrak n^\pm_\mathcal A,\mathfrak h_\mathcal A\}$ and we set
$$ U_\mathbb Z(\mathfrak a):= U(\mathfrak a)\cap U_\mathbb Z(\mathfrak g_\mathcal A)$$
we have
$$\mathbb C\otimes_\mathbb Z U_\mathbb Z(\mathfrak a)\cong U(\mathfrak a).$$
Therefore, $ U_\mathbb Z(\mathfrak g_\mathcal A)$ and $ U _\mathbb Z(\mathfrak g)$ are \textit{integral forms} of $ U(\mathfrak g_\mathcal A)$ and $ U(\mathfrak g)$, respectively.

\begin{rem} $U _\mathbb Z(\mathfrak a)$ is a free $\mathbb Z$-module spanned by monomials formed by elements of $M\cap U(\mathfrak a)$ for the appropriate $M\in\{\mathcal M,\mathcal M(A)\}$. Notice that $U_\mathbb Z(\mathfrak g)= U(\mathfrak g)\cap U_\mathbb Z(\mathfrak g_\mathcal A)$ which allows us to regard $U_\mathbb Z(\mathfrak g)$ as a $\mathbb Z$-subalgebra of $U_\mathbb Z(\mathfrak g_\mathcal A)$.
\end{rem}

Given a field $\mathbb F$, the $\mathbb F$--hyperalgebra of $\mathfrak a$ is defined by
$$ U_\mathbb F(\mathfrak a):=\mathbb F\otimes_{\mathbb Z} U_\mathbb Z(\mathfrak a).$$

\

\noindent
\textbf{Notation:} we will keep denoting by $x$ the image of the element $x\in U_\mathbb Z(\mathfrak a)$ in $U_\mathbb F(\mathfrak a)$.

\

The PBW Theorem gives the following isomorphisms
\begin{eqnarray}
     U_\mathbb F(\mathfrak g)&\cong& U_\mathbb F(\mathfrak n^-) U_\mathbb F(\mathfrak h) U_\mathbb F(\mathfrak n^+)\\\label{tridecomp}
     U_\mathbb F(\mathfrak g_\mathcal A)&\cong& U_\mathbb F(\mathfrak n^-_\mathcal A) U_\mathbb F(\mathfrak h_\mathcal A) U_\mathbb F(\mathfrak n^+_\mathcal A)
\end{eqnarray}
and the Hopf algebra structure on $U(\mathfrak g_\mathcal A)$ induces a natural Hopf algebra structure over $\mathbb Z$ on $ U_\mathbb Z(\mathfrak g_\mathcal A)$ and this in turn induces a Hopf algebra structure on $ U_\mathbb F(\mathfrak g_\mathcal A)$.

\

We refer to the $\mathbb F$--hyperalgebra $U_\mathbb F(\mathfrak g_\mathcal A)$ as a \textit{hyper-map algebra} of $\mathfrak g$ and $\mathcal A$ over $\mathbb F$.

\begin{rem}\label{rem1} Recall that, if the characteristic of $\mathbb F$ is zero, the algebra $U_\mathbb F(\mathfrak g_\mathcal A)$ is isomorphic to $ U((\mathfrak g_\mathcal A)_\mathbb F)$, where $(\mathfrak g_\mathcal A)_\mathbb F:=\mathbb F\otimes_\mathbb Z(\mathfrak g_\mathcal A)_\mathbb Z$ and $(\mathfrak g_\mathcal A)_\mathbb Z$ is the $\mathbb Z$-span of $\mathbb{B}$. But, if $\mathbb F$ has positive characteristic, we have an algebra homomorphism $ U((\mathfrak g_\mathcal A)_\mathbb F)\to  U _\mathbb F(\mathfrak g_\mathcal A)$ that fails to be injective and to be surjective.
\end{rem}

\subsection{Straightening identities}
The next lemmas are essential tools in the proofs of Theorem \ref{gAforms} and it is also crucial in the study of finite-dimensional representations of hyper-map algebras. 
    
\begin{defn}
	Given $a,b\in A$ and $\alpha\in R^+$, we define the following series with coefficients in $U (\mathfrak{g}_\mathcal A)$:
	\begin{eqnarray*}
	  	X_{\alpha,a,b}^-(u)&=&\sum_{j=0}^\infty\left(x_\alpha^-\otimes a^{j}b^{j+1}\right)u^{j+1}.
	\end{eqnarray*}
Then, we set $\left(X^-_{\alpha,a,b}(u)\right)_n\in U(\mathfrak{g}_\mathcal A)$ to be the coefficient of $u^{n}$ in $X^-_{\alpha,a,b}(u)$. 
\end{defn}

The next lemma was originally proved in the $U(\mathfrak g\otimes \mathbb C[t,t^{-1}])$ setting in \cite[Lemma 7.5]{G} and in \cite[Lemma 5.4]{C} for a general map algebra $U(\mathfrak g_\mathcal A)$.  The first presentation in the context of hyperalgebras was in \cite[Lemma 1.4]{JM}.

\begin{lem}\label{basicrel} 
    Let $\alpha \in R^+$, $a,b\in\mathcal A$ and $r,s \in \mathbb N$ with $s\ge r\ge 1$. Then,
    $$(x^+_{\alpha}\otimes a)^{(r)}(x^-_{\alpha}\otimes b)^{(s)}\equiv(-1)^r\sum_{j=0}^r\left(\left(X^-_{\alpha,a,b}(u)\right)^{(s-r)}\right)_{s-r+j}\Lambda_{\alpha,ab,r-j} \quad \mod U_\mathbb F(\mathfrak g_\mathcal A)U(\mathfrak n^+_\mathcal A).$$ \hfill\qedsymbol
\end{lem}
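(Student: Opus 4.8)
The plan is to localize the whole statement to the rank-one subalgebra attached to $\alpha$ and then transfer the already-known characteristic-zero identity to the hyperalgebra by base change from $\mathbb Z$ to $\mathbb F$. First I would observe that every symbol on both sides --- the divided powers $(x^\pm_\alpha\otimes c)^{(k)}$, the coefficients of $(X^-_{\alpha,a,b}(u))^{(s-r)}$, and the elements $\Lambda_{\alpha,ab,k}$ --- is built only from $x^\pm_\alpha$, $h_\alpha$, and elements of $\mathcal A$. Since $\{x^\pm_\alpha,h_\alpha\}$ spans a copy of $\mathfrak{sl}_2$, the entire congruence lives in $U_\mathbb F(\mathfrak{sl}_2\otimes\mathcal A)$ and the ambient right ideal may be replaced by the one generated by the corresponding $\mathfrak n^+\otimes\mathcal A$. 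This removes the dependence on the global root system, and because $\mathcal B$ is multiplicatively closed the relevant products $a^jb^{j+1}$ stay in the span of $\mathcal B$, so all divided powers that occur are genuine elements of the integral form.

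Next I would invoke the characteristic-zero version. By \cite[Lemma~5.4]{C} (extending \cite[Lemma~7.5]{G}) the analogous identity holds in $U(\mathfrak g_\mathcal A)$ over $\mathbb C$, where the divided powers and the $\Lambda$'s make sense after dividing by factorials. Writing $D$ for the difference of the two sides, that result asserts $D\in U(\mathfrak g_\mathcal A)\,U(\mathfrak n^+_\mathcal A)$. The key point is then \emph{integrality}: the left-hand product lies in $U_\mathbb Z(\mathfrak g_\mathcal A)$ by definition, and each factor on the right-hand side lies in $U_\mathbb Z(\mathfrak g_\mathcal A)$ by Theorem~\ref{gAforms} (the $\Lambda_{\alpha,ab,k}$ and the coefficients of divided powers of $X^-_{\alpha,a,b}(u)$ are exactly elements of the $\mathbb Z$-basis recorded there). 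Hence $D\in U_\mathbb Z(\mathfrak g_\mathcal A)$. Using the triangular decomposition \eqref{tridecomp} together with the PBW $\mathbb Z$-basis of the integral form, any element of $U_\mathbb Z(\mathfrak g_\mathcal A)$ that additionally lies in the $\mathbb C$-ideal $U(\mathfrak g_\mathcal A)\,U(\mathfrak n^+_\mathcal A)$ must in fact lie in $U_\mathbb Z(\mathfrak g_\mathcal A)\,(U_\mathbb Z(\mathfrak n^+_\mathcal A))^0$; that is, every PBW monomial occurring in $D$ carries a positive divided-power factor on the right.

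Finally I would apply the base-change functor $\mathbb F\otimes_\mathbb Z(-)$ to the resulting integral identity, which sends it to the asserted congruence in $U_\mathbb F(\mathfrak g_\mathcal A)$ with the image of $D$ landing in $U_\mathbb F(\mathfrak g_\mathcal A)\,U(\mathfrak n^+_\mathcal A)$. As an alternative to importing the characteristic-zero statement, one can run a direct induction on $r$ inside $U_\mathbb Z(\mathfrak{sl}_2\otimes\mathcal A)$, moving $x^+_\alpha\otimes a$ rightward past the $s$ factors $x^-_\alpha\otimes b$ via $[x^+_\alpha\otimes a,\,x^-_\alpha\otimes b]=h_\alpha\otimes ab$ and repackaging the emerging Cartan terms through the generating series defining $\Lambda_{\alpha,ab}(u)$; this is exactly how \cite[Lemma~1.4]{JM} handles $\mathcal A=\mathbb C[t,t^{-1}]$, and the same bookkeeping applies verbatim after substituting the products $a^jb^{j+1}$.

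The main obstacle is the integrality/triangular-compatibility step: one must verify that $D$, a priori only known to sit in the $\mathbb C$-ideal $U(\mathfrak g_\mathcal A)U(\mathfrak n^+_\mathcal A)$, actually lies in the integral right ideal generated by $(U_\mathbb Z(\mathfrak n^+_\mathcal A))^0$, so that reduction modulo the characteristic neither creates nor destroys terms. This is precisely where the freeness of $U_\mathbb Z(\mathfrak g_\mathcal A)$ over $\mathbb Z$ and the explicit basis of Theorem~\ref{gAforms} are indispensable; the commutation combinatorics itself is routine once phrased through the series $X^-_{\alpha,a,b}(u)$ and $\Lambda_{\alpha,ab}(u)$.
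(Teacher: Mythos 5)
Your proposal is correct and takes essentially the same route as the paper: the paper proves this lemma purely by citation to the integral-form straightening identities of \cite[Lemma 7.5]{G} and \cite[Lemma 5.4]{C} together with their hyperalgebra formulation in \cite[Lemma 1.4]{JM}, which is precisely the ``known identity over $\mathbb{C}$/$\mathbb{Z}$, check integrality of all terms, then apply $\mathbb F\otimes_\mathbb Z(-)$'' argument you describe. Your explicit justification of the key transfer step --- that an element of $U_\mathbb Z(\mathfrak g_\mathcal A)$ lying in $U(\mathfrak g_\mathcal A)U(\mathfrak n^+_\mathcal A)^0$ must lie in the integral right ideal, via the compatibility of the PBW $\mathbb Z$-basis of Theorem \ref{gAforms} with the triangular decomposition --- is exactly the standard argument implicit in those citations.
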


Given $\alpha \in R^+$, $a\in \mathcal A$ and $k\ge 0$, define the degree of $(x^\pm_{\alpha} \otimes a)^{(k)}$ to be $k$. For a monomial of the form $(x^\pm_{\alpha_1}\otimes a_1)^{(k_1)}\cdots (x^\pm_{\alpha_l}\otimes a_l)^{(k_l)}$, where $\alpha_1,\ldots,\alpha_l\in R^+$, $a_1,\ldots,a_l\in A$, $k_1,\ldots,k_l\in\mathbb{Z}_+$, and the choice of $\pm$ fixed, define its degree to be $k_1+\cdots+k_l$.

\begin{lem}\label{commutrels} Let $\alpha,\beta\in R^+$,  $i\in I$, $a,b\in \mathcal A$, $k,l \in \mathbb N$.
    \begin{enumerate}
        \item \label{commutrels3} $\left(x^\pm_{\alpha}\otimes a\right)^{(k)}\left(x^\pm_{\beta} \otimes b\right)^{(l)}$ is in the $\mathbb Z$--span of elements $\left(x^\pm_{\beta} \otimes b\right)^{(l)}\left(x^\pm_{\alpha} \otimes a\right)^{(k)}$ and other monomials of degree strictly smaller than $k+l$.

        \item \label{commutrels1} $$\left(x_{\alpha}^+\right)^{(l)}\left(x_{\alpha}^-\right)^{(k)} = \sum_{m=0}^{\rm{min}\{k,l\}}\left(x_{\alpha}^-\right)^{(k-m)}\binom{h_{\alpha}-k-l+2m}{m}\left(x_{\alpha}^+\right)^{(l-m)}.$$

        \item \label{commutrels2} $$\binom{h_{i}}{l}\left(x^\pm_{\alpha}\otimes a\right)^{(k)} = \left(x^\pm_{\alpha} \otimes a\right)^{(k)}\binom{h_{i}\pm k\alpha(h_{i})}{l}.$$

        \item \label{commutrels4} $$\left(x^\pm_{\alpha}\otimes a\right)^{(k)}\left(x^\pm_{\alpha} \otimes a\right)^{(l)} = \binom{k+l}{k}\left(x^\pm_{\alpha} \otimes a\right)^{(k+l)}.$$
        
        \item \label{lambdax}
$$\Lambda_{\alpha,a,r}(x_\alpha^-\otimes b)^{(k)} =\sum_{s=0}^{r} \left(\sum_{j\ge 0}(j+1)(x_\alpha^-\otimes a^jb) \ u^{j} \right)_{r-s}^{(k)}\Lambda_{\alpha,a,s}.$$
    \end{enumerate}
\end{lem}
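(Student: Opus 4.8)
The plan is to prove all five relations first inside the complex algebra $U(\mathfrak g_\mathcal A)$ and then to note that both sides of each identity already lie in the integral form $U_\mathbb Z(\mathfrak g_\mathcal A)$ of Theorem~\ref{gAforms}; applying $\mathbb F\otimes_\mathbb Z(-)$ transports everything to $U_\mathbb F(\mathfrak g_\mathcal A)$. Parts \eqref{commutrels4} and \eqref{commutrels2} are immediate. For \eqref{commutrels4}, writing $X=x^\pm_\alpha\otimes a$ one has $X^{(k)}X^{(l)}=\frac{X^k}{k!}\frac{X^l}{l!}=\binom{k+l}{k}\frac{X^{k+l}}{(k+l)!}=\binom{k+l}{k}X^{(k+l)}$. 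For \eqref{commutrels2}, the weight relation $[h_i,x^\pm_\alpha\otimes a]=\pm\alpha(h_i)(x^\pm_\alpha\otimes a)$ gives, by induction on $k$, the intertwining $h_i(x^\pm_\alpha\otimes a)^{(k)}=(x^\pm_\alpha\otimes a)^{(k)}(h_i\pm k\alpha(h_i))$; hence $p(h_i)(x^\pm_\alpha\otimes a)^{(k)}=(x^\pm_\alpha\otimes a)^{(k)}p(h_i\pm k\alpha(h_i))$ for every polynomial $p$, and taking $p=\binom{\,\cdot\,}{l}$ yields the claim.

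For \eqref{commutrels1} I would restrict to the $\mathfrak{sl}_2$-triple $\{x^\pm_\alpha,h_\alpha\}$, where the stated formula is Kostant's classical divided-power identity for $U_\mathbb Z(\mathfrak{sl}_2)$ \cite{K2}, proved by induction on $\min\{k,l\}$; this is also where the integrality of the coefficients $\binom{h_\alpha-k-l+2m}{m}$ is guaranteed. For \eqref{commutrels3}, both factors lie in the nilpotent algebra $\mathfrak n^\pm_\mathcal A$, and $[x^\pm_\alpha\otimes a,x^\pm_\beta\otimes b]=[x^\pm_\alpha,x^\pm_\beta]\otimes ab$ is a single root vector of degree $1$. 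Thus each interchange of a factor $x^\pm_\alpha\otimes a$ with $x^\pm_\beta\otimes b$ replaces a degree-$2$ term by a degree-$1$ correction; since root heights in $\mathfrak n^\pm$ are bounded, only finitely many brackets are nonzero, and iterating shows that $(x^\pm_\alpha\otimes a)^{(k)}(x^\pm_\beta\otimes b)^{(l)}$ equals $(x^\pm_\beta\otimes b)^{(l)}(x^\pm_\alpha\otimes a)^{(k)}$ modulo monomials of degree $<k+l$, with integer coefficients by the integral PBW basis of Theorem~\ref{gAforms}.

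The substantive statement is \eqref{lambdax}, which I would prove by a generating-function computation carried out in $U(\mathfrak g_\mathcal A)[[u]]$. Put $H(u)=-\sum_{s\ge1}\frac{h_\alpha\otimes a^s}{s}u^s$, so that $\Lambda_{\alpha,a}(u)=\exp H(u)$, and let $\mathcal S(u)=\sum_{j\ge0}(j+1)(x^-_\alpha\otimes a^jb)u^j$ be the series appearing on the right. The elements $x^-_\alpha\otimes a^jb$ mutually commute, since their brackets factor through $[x^-_\alpha,x^-_\alpha]=0$, so the span $V=\bigoplus_{j\ge0}\mathbb C(x^-_\alpha\otimes a^jb)$ is $\operatorname{ad}(\mathfrak h_\mathcal A)$-stable. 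Writing $x^-_\alpha\otimes a^jb\leftrightarrow w^j$ and using $[h_\alpha\otimes a^s,x^-_\alpha\otimes a^jb]=-2(x^-_\alpha\otimes a^{s+j}b)$, one finds that $\operatorname{ad}H(u)$ acts on $V$ as multiplication by $-2\ln(1-uw)$, whence $\operatorname{Ad}(\Lambda_{\alpha,a}(u))=\exp\operatorname{ad}H(u)$ acts as multiplication by $(1-uw)^{-2}=\sum_{j\ge0}(j+1)u^jw^j$. Translated back, this is exactly $\Lambda_{\alpha,a}(u)(x^-_\alpha\otimes b)=\mathcal S(u)\Lambda_{\alpha,a}(u)$. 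Conjugating $k$ times and dividing by $k!$ gives
\[
\Lambda_{\alpha,a}(u)(x^-_\alpha\otimes b)^{(k)}=\tfrac1{k!}\bigl(\operatorname{Ad}\Lambda_{\alpha,a}(u)(x^-_\alpha\otimes b)\bigr)^k\Lambda_{\alpha,a}(u)=\mathcal S(u)^{(k)}\Lambda_{\alpha,a}(u),
\]
and extracting the coefficient of $u^r$, with the $\mathfrak n^-$-valued factor $\mathcal S(u)^{(k)}$ standing to the left of the $\mathfrak h$-valued factor $\Lambda_{\alpha,a}(u)$ (already in PBW order), produces precisely $\sum_{s=0}^r(\mathcal S(u)^{(k)})_{r-s}\Lambda_{\alpha,a,s}$.

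I expect the main obstacle to be making this formal computation rigorous — in particular justifying the exponentiation of $\operatorname{ad}H(u)$ on $V$ and correctly interpreting the divided power $\mathcal S(u)^{(k)}$ of a power series. The integrality, by contrast, is \emph{not} an issue here: because the coefficients of $\mathcal S(u)$ commute, the multinomial theorem gives $\mathcal S(u)^{(k)}=\sum_{\sum_j k_j=k}\prod_j(j+1)^{k_j}(x^-_\alpha\otimes a^jb)^{(k_j)}u^{\sum_j jk_j}$, a manifestly $\mathbb Z$-linear combination of products of the integral divided powers occurring in $\mathcal M(\mathcal A)$. As with Lemma~\ref{a^kintermsofa}, the only genuinely delicate point is to confirm that the characteristic-zero series identity descends coefficientwise to $U_\mathbb Z(\mathfrak g_\mathcal A)$, after which the base change to $U_\mathbb F(\mathfrak g_\mathcal A)$ is automatic.
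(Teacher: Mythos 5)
Your proposal is correct, but it takes a genuinely different route from the paper, whose ``proof'' of this lemma is essentially a list of citations: item \eqref{commutrels3} is deduced from \cite[Equation (4.1.6)]{BC}, item \eqref{commutrels1} from \cite[Lemma 26.2]{H}, item \eqref{commutrels2} by a short induction on $k+l$, item \eqref{commutrels4} directly, and item \eqref{lambdax} as a reformulation of \cite[Proposition 4.1.2 (4.1.5)]{BC} (with affine antecedents in \cite{G,M}). You instead give self-contained arguments. Your treatments of \eqref{commutrels1}, \eqref{commutrels2} and \eqref{commutrels4} coincide with the standard ones. The real added value is your proof of \eqref{lambdax}: writing $\Lambda_{\alpha,a}(u)=\exp H(u)$ and computing $\operatorname{Ad}\bigl(\Lambda_{\alpha,a}(u)\bigr)$ on the span of the commuting elements $x^-_\alpha\otimes a^jb$ via $\exp(\operatorname{ad}H(u))$ acting as $(1-uw)^{-2}=\sum_{j\ge0}(j+1)u^jw^j$ is a correct and complete derivation of what the paper only cites, and it is very much in the spirit of Garland's original generating-function computations. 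The formal manipulations you worry about are unproblematic: $H(u)$ has no constant term, so every coefficient of $\exp(\operatorname{ad}H(u))(x^-_\alpha\otimes b)$ and of the conjugation identity is a finite sum, and your multinomial expansion of $\mathcal S(u)^{(k)}$ shows both sides of \eqref{lambdax} lie in $U_\mathbb Z(\mathfrak g_\mathcal A)$, so the descent from $U(\mathfrak g_\mathcal A)$ to the integral form and the base change to $U_\mathbb F(\mathfrak g_\mathcal A)$ are legitimate. What the paper's approach buys is brevity and a clean logical pedigree; what yours buys is a reader who can actually verify identity \eqref{lambdax} without opening \cite{BC}.

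The one point you should repair is the integrality step in item \eqref{commutrels3}, where you invoke Theorem \ref{gAforms} to conclude that the lower-degree correction terms have integer coefficients. Within this paper that appeal is admissible, since Theorem \ref{gAforms} is quoted as an established result; your argument (the product lies in $U_\mathbb Z(\mathfrak n^\pm_\mathcal A)$, whose integral ordered-monomial basis is compatible with the degree filtration) does yield the claim. But it inverts the logical order of the sources: as the paper itself remarks, these straightening identities are ``essential tools in the proofs of Theorem \ref{gAforms}'', i.e., in \cite{BC} (and \cite{G,K2}) identity \eqref{commutrels3} is established by explicit commutation formulas precisely so that the integral PBW theorem can then be proved. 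So your derivation of \eqref{commutrels3} could not replace the cited proof without circularity; it is a deduction from the theorem, not an independent proof of the lemma. Either cite \cite[Equation (4.1.6)]{BC} for this item, as the paper does, or reproduce the explicit inductive commutation argument.
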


\begin{proof}
    The item (1) can be deduced from \cite[Equation (4.1.6)]{BC} and the item  (2) can be deduced from \cite[Lemma 26.2]{H}. (3) is proved by induction on $k+l$ and (4) is easily established. (5) is a reformulation of Proposition 4.1.2 (4.1.5) in \cite{BC} (also, see \cite[(8.12)]{G} and \cite[Lemma 4.3.4 (iii)]{M} for these formulas in the context of affine Kac-Moody algebras).
\end{proof}

\section{The categories of modules} \label{categories}

Let $\mathbb F$ be an algebraically closed field. The symbol $\otimes$ denotes the tensor product of $\mathbb F$-vector spaces.

 The following subsections, except \ref{sec:rev.g} and \ref{end}, are the positive characteristic partial counterpart of \cite[Sections 3 and 4]{CFK}.

\subsection{Finite-dimensional modules for hyperalgebras \texorpdfstring{$ U_\mathbb{F}(\mathfrak g)$}{}} \label{sec:rev.g}

We now review the finite-dimensional representation theory of $ U _\mathbb F(\mathfrak g)$ and refer to \cite[Section 2]{JM} for a more detailed review. This is motivation for what we will develop in the next sections.

Let $V$ be a $ U_\mathbb F(\mathfrak g)$-module. A nonzero vector $v\in V$ is called a weight vector if there exists $\mu\in U_\mathbb F(\mathfrak h)^*$ such that $hv=\mu(h)v$ for all $h\in U_\mathbb F(\mathfrak h)$. 
The subspace consisting of all weight vectors of weight $\mu$ is called weight space of weight $\mu$, which we denote by $V_\mu$. 
 When $V_\mu\ne 0$, $\mu$ is called a weight of $V$ and $\mathrm{wt} (V) = \{\mu\in U_\mathbb F(\mathfrak h)^*:V_\mu\ne 0\}$ is called the set of weights of $V$.
 If $V= \bigoplus_{\mu\in U_\mathbb F(\mathfrak h)^*} V_\mu$, then $V$ is said to be a weight module.
 
 Moreover, when $v \in V$ is a weight vector and $(x_\alpha^+)^{(k)} v = 0$ for all $\alpha\in R^+, k>0$, then $v$ is called a a highest-weight vector. When $V$ is generated by a highest-weight vector, it is said to be a highest-weight module. 
 
 Notice that we have an inclusion
 $P\hookrightarrow U_\mathbb F(\mathfrak h)^*$ determined by associating to $\mu\in P$ the functional (which we keep denoting $\mu$) given by
\begin{equation*}
\mu\left(\binom{h_i}{k}\right) := \binom{\mu(h_i)}{k} \quad\text{and}\quad \mu(xy):=\mu(x)\mu(y) \quad\text{for all}\quad  i\in I,k\ge 0, x,y\in U_\mathbb F(\mathfrak h).
\end{equation*}
In particular, this inclusion  
provides a partial order $\le$ on $U _\mathbb F(\mathfrak h)^*$ defined by $\mu\le\lambda$ if $\lambda-\mu\in Q^+$ and we have
\begin{equation}\label{e:xactonws}
    (x_\alpha^\pm)^{(k)} V_\mu \subseteq V_{\mu\pm k\alpha}\quad\text{for all}\quad \alpha\in R^+, \mu\in U_\mathbb F(\mathfrak h)^*, k>0.
\end{equation}

\begin{thm}\label{t:rh}
    Let $V$ be a $ U_\mathbb F(\mathfrak g)$-module.
    \begin{enumerate}
        \item If $V$ is finite-dimensional, then $V$ is a weight-module, $\mathrm{wt}  (V) \subseteq P$, and $\dim V_\mu = \dim V_{\sigma\mu}$ for all $\sigma\in\mathcal W, \mu \in U_\mathbb F(\mathfrak h)^\ast$. 
        
        \item \label{t:rh.weights} If $V$ is a highest-weight module of highest weight $\lambda$, then $\dim(V_{\lambda})=1$ and $V_{\mu}\ne 0$ only if $w_0\lambda \leq \mu\le \lambda$. Moreover, $V$  has a unique maximal proper submodule and a unique irreducible quotient. In particular, $V$ is indecomposable.
        
        \item{\label{t:rh.c}} For each $\lambda\in P^+$, the $ U_\mathbb F(\mathfrak g)$-module $W_\mathbb F(\lambda)$ given by the quotient of $ U_\mathbb F(\mathfrak g)$ by the left ideal $I_\mathbb F(\lambda)$ generated by
        \begin{equation*}
             U_\mathbb F (\mathfrak n^+)^0, \quad h-\lambda(h) \quad \text{and} \quad (x_\alpha^-)^{(k)}, \quad\text{for all}\quad h\in U_\mathbb F(\mathfrak h), \ \alpha\in R^+, \ k>\lambda(h_\alpha),
        \end{equation*}
        is nonzero and finite-dimensional. Moreover, every finite-dimensional highest-weight module of highest weight $\lambda$ is a quotient of $W_\mathbb F(\lambda)$.
        
        \item If $V$ is finite-dimensional and also irreducible, then there exists a unique $\lambda\in P^+$ such that $V$ is isomorphic to the irreducible quotient $V_\mathbb F(\lambda)$ of $W_\mathbb F(\lambda)$. If the characteristic of $\mathbb F$ is zero, then $W_\mathbb F(\lambda)$ is irreducible.
        
        \item\label{t:chWg} For each $\lambda\in P^+$, $\mathrm{ch} (W_\mathbb F(\lambda))$ is given by the Weyl character formula. In particular, $\mu\in\mathrm{wt} (W_\mathbb F(\lambda))$ if, and only if, $\sigma\mu\le\lambda$ for all $\sigma\in\mathcal W$. \hfill \qed
    \end{enumerate}
\end{thm}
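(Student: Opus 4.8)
The plan is to reduce everything to the rank-one theory of $U_\mathbb F(\mathfrak{sl}_2)$ together with a comparison against the characteristic-zero picture furnished by the integral form of Theorem \ref{gAforms}. For part (1), the essential observation is that $U_\mathbb F(\mathfrak h)$ is commutative and that, for each $i\in I$, the subalgebra generated by $\{(x_i^\pm)^{(k)},\binom{h_i}{k}:k\in\mathbb N\}$ is a copy of $U_\mathbb F(\mathfrak{sl}_2)$. Restricting a finite-dimensional $V$ to this subalgebra and invoking the classification of finite-dimensional $U_\mathbb F(\mathfrak{sl}_2)$-modules, one sees that each $\binom{h_i}{k}$ acts semisimply with eigenvalues of the form $\binom{m}{k}$, $m\in\mathbb Z$; since $\mathbb F$ is algebraically closed and the $\binom{h_i}{k}$ commute, $V$ decomposes as a weight module with $\mathrm{wt}(V)\subseteq P$. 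The Weyl-group symmetry $\dim V_\mu=\dim V_{\sigma\mu}$ follows by constructing, for each simple reflection, the usual lift to $U_\mathbb F(\mathfrak g)$ built from the divided-power exponentials $\sum_k(x_i^\pm)^{(k)}$ (finite sums on $V$), which induce linear isomorphisms $V_\mu\to V_{s_i\mu}$ compatibly with \eqref{e:xactonws}.

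For part (2), write $V=U_\mathbb F(\mathfrak g)v$ with $v$ a highest-weight vector of weight $\lambda$. The triangular decomposition \eqref{tridecomp} gives $V=U_\mathbb F(\mathfrak n^-)v$, so all weights lie in $\lambda-Q^+$, whence $\mu\le\lambda$, and $V_\lambda=\mathbb F v$ is one-dimensional. The lower bound $w_0\lambda\le\mu$ then follows from the Weyl symmetry of part (1) once $V$ is finite-dimensional. For the unique maximal submodule: any proper submodule $N\subsetneq V$ must satisfy $N_\lambda=0$, since otherwise $N\ni v$ forces $N=V$; hence the sum of all proper submodules still has trivial $\lambda$-weight space and is therefore proper. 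This sum is the unique maximal proper submodule, its quotient is the unique irreducible quotient, and indecomposability is immediate.

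For parts (3) and (4), I would first exhibit a nonzero finite-dimensional highest-weight module of highest weight $\lambda$ by reducing the integral form: inside the complex irreducible $V_\mathbb C(\lambda)$ take the $U_\mathbb Z(\mathfrak g)$-lattice $V_\mathbb Z(\lambda)=U_\mathbb Z(\mathfrak g)v_\lambda$ and set $\overline V(\lambda)=\mathbb F\otimes_\mathbb Z V_\mathbb Z(\lambda)$. By Theorem \ref{gAforms} this is a free $\mathbb Z$-module of the correct rank, so $\overline V(\lambda)$ is finite-dimensional, nonzero, and satisfies the defining relations of $W_\mathbb F(\lambda)$; hence $W_\mathbb F(\lambda)\twoheadrightarrow\overline V(\lambda)\ne 0$ and in particular $W_\mathbb F(\lambda)\ne 0$. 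The universal property is then formal: a finite-dimensional highest-weight module of highest weight $\lambda$ satisfies $U_\mathbb F(\mathfrak n^+)^0v=0$, $hv=\lambda(h)v$, and, because its restriction to each rank-one subalgebra attached to $\alpha$ is finite-dimensional with highest weight $\lambda(h_\alpha)$, also $(x_\alpha^-)^{(k)}v=0$ for $k>\lambda(h_\alpha)$, so it is a quotient of $W_\mathbb F(\lambda)$. Finiteness of $W_\mathbb F(\lambda)$ itself comes from integrability: the defining relations make each $\mathfrak{sl}_2$-action locally finite, so $\mathrm{wt}(W_\mathbb F(\lambda))$ is $\mathcal W$-stable and bounded above by $\lambda$, hence finite, while a PBW spanning argument bounds each weight space. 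Part (4) then follows by choosing a maximal weight of a finite-dimensional irreducible $V$: the rank-one theory forces it to be dominant, the corresponding vector is a highest-weight vector, and irreducibility identifies $V$ with the irreducible quotient $V_\mathbb F(\lambda)$; in characteristic zero complete reducibility of $U(\mathfrak g_\mathbb F)$-modules forces $W_\mathbb F(\lambda)=V_\mathbb F(\lambda)$.

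The main obstacle --- and the content of part (5) --- is pinning down the character. The plan is to show that the surjection $W_\mathbb F(\lambda)\twoheadrightarrow\overline V(\lambda)$ is an isomorphism, by bounding $\dim W_\mathbb F(\lambda)$ from above by $\dim_\mathbb C V_\mathbb C(\lambda)$. This is exactly where the straightening identities of Lemmas \ref{basicrel} and \ref{commutrels} enter: using them one shows that the same finite set of ordered PBW monomials in $U_\mathbb F(\mathfrak n^-)$ that spans $V_\mathbb C(\lambda)$ already spans $W_\mathbb F(\lambda)$, so no weight space can grow upon reduction. Combined with the fact that $V_\mathbb Z(\lambda)$ is a $\mathbb Z$-form of $V_\mathbb C(\lambda)$ whose weight-space ranks equal the classical multiplicities, this yields $\mathrm{ch}(W_\mathbb F(\lambda))=\mathrm{ch}(V_\mathbb C(\lambda))$, which is the Weyl character formula; the weight description $\mu\in\mathrm{wt}(W_\mathbb F(\lambda))$ if and only if $\sigma\mu\le\lambda$ for all $\sigma\in\mathcal W$ is then the classical characterization of the support of $V_\mathbb C(\lambda)$. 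I expect the spanning and straightening bookkeeping to be the delicate step, since it must be carried out uniformly in the characteristic of $\mathbb F$.
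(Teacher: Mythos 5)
The paper itself never proves Theorem \ref{t:rh}: it is stated as a survey of known facts, closed with a qed-symbol and a pointer to \cite[Section 2]{JM} (whose ultimate source is Jantzen's book \cite{J}, listed but never cited in the text), so the only meaningful comparison is with the standard arguments in that literature. Your parts (1)--(4) do follow the standard route (rank-one reduction, reduction mod $p$ of the lattice $U_\mathbb Z(\mathfrak g)v_\lambda\subset V_\mathbb C(\lambda)$, the sum-of-proper-submodules argument), with two caveats. In (1), the claim that each $\binom{h_i}{k}$ acts semisimply with eigenvalues $\binom{m}{k}$, $m\in\mathbb Z$, quotes as a black box exactly what needs proof; semisimplicity is easy to repair (every element in the image of $U_\mathbb Z(\mathfrak h)$ satisfies $X^p=X$ in $U_\mathbb F(\mathfrak h)$ when $\operatorname{char}\mathbb F=p$), but \emph{integrality} of the joint characters --- ruling out the ``$p$-adic'' characters that $U_\mathbb F(\mathfrak h)$ genuinely possesses --- requires the interplay with the divided powers, e.g.\ the identity $(x_i^+)^{(k)}(x_i^-)^{(k)}v=\binom{h_i}{k}v$ for a vector $v$ killed by $U_\mathbb F(\mathfrak n^+)^0$, combined with $(x_i^-)^{(k)}v=0$ for $k\gg0$. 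In (2), you obtain the lower bound $w_0\lambda\le\mu$ only for finite-dimensional $V$; this restriction is in fact unavoidable, because by Theorem \ref{gAforms} the hyperalgebra $U_\mathbb F(\mathfrak g)$ is free as a right module over $U_\mathbb F(\mathfrak h)U_\mathbb F(\mathfrak n^+)$, so the induced (hyper-Verma) module of highest weight $\lambda$ is a highest-weight module whose weights are unbounded below --- the statement as transcribed in the paper is false without a finiteness hypothesis, and your write-up should flag that it only establishes the corrected claim.

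The genuine gap is part (5). Your plan is to use Lemmas \ref{basicrel} and \ref{commutrels} to show that the ordered PBW monomials forming a basis of $V_\mathbb C(\lambda)$ already span $W_\mathbb F(\lambda)$, hence $\dim W_\mathbb F(\lambda)\le\dim V_\mathbb C(\lambda)$. But what the straightening identities actually give is that $W_\mathbb F(\lambda)$ is spanned by the ordered monomials $\prod_{\alpha\in R^+}(x_\alpha^-)^{(k_\alpha)}w$ with $k_\alpha\le\lambda(h_\alpha)$, and this set strictly overcounts the Weyl character in general: for $\mathfrak g=\mathfrak{sl}_3$ and $\lambda=\omega_1$ it has $4$ elements while $\dim V_\mathbb C(\omega_1)=3$. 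To cut this spanning set down to the characteristic-zero basis you would have to transfer the $\mathbb Z$-linear relations holding among these monomials inside $V_\mathbb C(\lambda)$ into the abstractly presented module $W_\mathbb F(\lambda)$, i.e.\ to know that $\operatorname{Ann}_{U_\mathbb Z(\mathfrak g)}(v_\lambda)$ reduces, mod $p$, into the left ideal generated by the three families of defining relations of Theorem \ref{t:rh}(\ref{t:rh.c}); that is precisely the assertion $W_\mathbb F(\lambda)\cong\mathbb F\otimes_\mathbb Z U_\mathbb Z(\mathfrak g)v_\lambda$ you are trying to prove, so the proposed bookkeeping is circular. The proofs in the literature close this gap with outside input: either one integrates finite-dimensional $U_\mathbb F(\mathfrak g)$-modules to rational modules for the simply connected algebraic group and invokes the universal property of the Weyl module $H^0(-w_0\lambda)^*$ (Frobenius reciprocity plus Kempf's vanishing theorem), or one proves an integral presentation theorem for the lattice directly; neither follows from the straightening identities alone. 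By contrast, your argument for mere finite-dimensionality of $W_\mathbb F(\lambda)$ in part (3) is sound, since the overcounting spanning set is still finite.
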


The module $W_\mathbb F(\lambda)$ defined in Theorem \ref{t:rh} \eqref{t:rh.c} is called the \textit{Weyl module of highest weight $\lambda$}.

\begin{rem} 
    The notions of lowest-weight vectors and modules are similar and are obtained by replacing $(x_\alpha^+)^{(k)}$ by $(x_\alpha^-)^{(k)}$.
\end{rem}

\subsection{The category \texorpdfstring{$\mathcal I_\mathbb F$}{}  of \texorpdfstring{$U_\mathbb F(\mathfrak g_{\mathcal A})$}{}-modules and a weight-bounded subcategory} 

Let $V$ be a $ U_\mathbb F(\mathfrak g)$-module.  It is said that $V$ is locally finite-dimensional if any element of $V$ lies in a finite-dimensional $ U_\mathbb F(\mathfrak g)$-submodule of $V$. In other words, $V$ is isomorphic to a direct sum of irreducible finite-dimensional $U_\mathbb F(\mathfrak g)$-modules.

Let $\mathcal I_\mathbb F$ be the full subcategory of the category of $U_\mathbb F(\mathfrak g_{\mathcal A})$-modules which are locally finite-dimensional $U_\mathbb F(\mathfrak g)$-modules. Clearly, $\mathcal I_\mathbb F$ is an abelian category that is closed under tensor products. We shall abuse notation and write $V\in \mathcal I_\mathbb F$ to mean $V$ is an object of $I_\mathbb F$.

In the rest of this paper we shall use the following elementary result without mention: 

\begin{lem}\label{l:elem1} Let $V\in\mathcal  I_\mathbb F$.
	\begin{enumerate}
		\item If $V_\lambda\ne 0$ and  $\mathrm{wt}  (V)\subset\lambda-Q^+$, then $\lambda\in P^+$ and $U_\mathbb F(\mathfrak n_{\mathcal A}^+)^0V_\lambda=(x_\alpha^-)^{(s)}V_\lambda=0$, for all $\alpha\in R^+$, $s>\lambda(h_i)$.
		If, additionally, $V=U_\mathbb F(\mathfrak g_{\mathcal A})V_\lambda$ and $\dim V_\lambda=1$, then $V$ has a unique irreducible  quotient.
		\item If $V=U_\mathbb F(\mathfrak g_{\mathcal A})V_\lambda$ and $U_\mathbb F(\mathfrak n_{\mathcal A}^+)^0V_\lambda=0$, then $\mathrm{wt} (V)\subset\lambda-Q^+$.
		\item If $V$ is irreducible and finite-dimensional, then there exists $\lambda\in\mathrm{wt} (V)$ such that $\dim V_\lambda=1$ and $\mathrm{wt} (V)\subset\lambda- Q^+$. \hfill\qedsymbol
	\end{enumerate}
\end{lem}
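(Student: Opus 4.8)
The plan is to prove the three parts of Lemma~\ref{l:elem1} by exploiting the decomposition \eqref{tridecomp} together with the elementary weight theory for $U_\mathbb F(\mathfrak g)$ recorded in Theorem~\ref{t:rh}, applied locally to finite-dimensional $U_\mathbb F(\mathfrak g)$-submodules. The guiding observation is that every object of $\mathcal I_\mathbb F$ is, by definition, a direct sum of finite-dimensional irreducible $U_\mathbb F(\mathfrak g)$-modules, so weight-space statements about $V$ can be checked inside such finite-dimensional pieces, where Theorem~\ref{t:rh}\eqref{t:rh.weights} and the Weyl-symmetry of weights apply.

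For part (1), I would first argue that $\lambda\in P^+$: since $V_\lambda\ne 0$ and $V\in\mathcal I_\mathbb F$, the vector in $V_\lambda$ lies in a finite-dimensional $U_\mathbb F(\mathfrak g)$-module, so $\lambda\in P$ by Theorem~\ref{t:rh}(1); the hypothesis $\mathrm{wt}(V)\subset\lambda-Q^+$ forces $\lambda$ to be the top weight in its $\mathfrak g$-string in each direction, so $\lambda(h_i)\ge 0$ for all $i$, giving $\lambda\in P^+$. Next, to see $U_\mathbb F(\mathfrak n_\mathcal A^+)^0 V_\lambda=0$, I note that any element of the augmentation ideal raises weight (by \eqref{e:xactonws} the operators $(x_\alpha^+\otimes b)^{(k)}$ with $k>0$ send $V_\lambda$ into $V_{\lambda+k\alpha}$, which vanishes since $\lambda+k\alpha\notin\lambda-Q^+$); similarly $(x_\alpha^-)^{(s)}$ with $s>\lambda(h_\alpha)$ annihilates $V_\lambda$ because, restricting to the $\mathfrak{sl}_2$-copy for $\alpha$ acting on the finite-dimensional $U_\mathbb F(\mathfrak g)$-submodule generated by the weight vector, the highest-weight relations of Theorem~\ref{t:rh}\eqref{t:rh.c} apply. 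For the final uniqueness claim, assuming $V=U_\mathbb F(\mathfrak g_\mathcal A)V_\lambda$ with $\dim V_\lambda=1$, I would show the sum of all submodules not meeting $V_\lambda$ is a proper submodule: any proper submodule $N$ has $N_\lambda=0$ (else $N\supseteq U_\mathbb F(\mathfrak g_\mathcal A)V_\lambda=V$), so the sum of proper submodules still has trivial $\lambda$-weight space and hence is proper, yielding a unique maximal proper submodule and thus a unique irreducible quotient.

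For part (2), the idea is the reverse implication via a generation argument: if $U_\mathbb F(\mathfrak n_\mathcal A^+)^0 V_\lambda=0$ and $V=U_\mathbb F(\mathfrak g_\mathcal A)V_\lambda$, then applying the triangular decomposition \eqref{tridecomp} I write $V=U_\mathbb F(\mathfrak n_\mathcal A^-)\,U_\mathbb F(\mathfrak h_\mathcal A)\,U_\mathbb F(\mathfrak n_\mathcal A^+)\,V_\lambda$. Since $U_\mathbb F(\mathfrak n_\mathcal A^+)^0 V_\lambda=0$, the factor $U_\mathbb F(\mathfrak n_\mathcal A^+)$ acts on $V_\lambda$ through its counit, so $U_\mathbb F(\mathfrak n_\mathcal A^+)V_\lambda=V_\lambda$; as $U_\mathbb F(\mathfrak h_\mathcal A)$ preserves weights, we get $V=U_\mathbb F(\mathfrak n_\mathcal A^-)V_\lambda$, and every monomial in $U_\mathbb F(\mathfrak n_\mathcal A^-)$ lowers the weight by an element of $Q^+$ (again by \eqref{e:xactonws} applied to the negative root vectors tensored with basis elements of $\mathcal B$), whence $\mathrm{wt}(V)\subset\lambda-Q^+$.

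For part (3), given $V$ irreducible and finite-dimensional, I would invoke Theorem~\ref{t:rh}(1) to restrict to a finite set of weights $\mathrm{wt}(V)\subset P$ and choose $\lambda$ maximal with respect to the partial order $\le$; such a maximal element exists because the weight set is finite. Maximality gives $\mathrm{wt}(V)\subset\lambda-Q^+$ immediately, and it forces $V_\lambda$ to be a highest-weight space for the $\mathfrak g$-action, so by irreducibility and Theorem~\ref{t:rh}\eqref{t:rh.weights} applied to the $U_\mathbb F(\mathfrak g)$-module generated by a vector in $V_\lambda$ one concludes $\dim V_\lambda=1$. The main obstacle I anticipate is the interplay between the $\mathfrak g$-theory (where Theorem~\ref{t:rh} is available) and the larger $\mathfrak g_\mathcal A$-action: one must be careful that weight considerations, which are controlled by the $\mathfrak h$-part, transfer correctly when the full algebra $U_\mathbb F(\mathfrak g_\mathcal A)$ acts, and in particular that the operators $x_\alpha^\pm\otimes b$ for nontrivial $b\in\mathcal B$ shift weights by exactly $\pm\alpha$ (independent of $b$), which is what makes the $Q^+$-containment arguments in parts (1)--(3) go through uniformly.
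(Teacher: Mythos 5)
The paper itself states Lemma~\ref{l:elem1} without proof (it is the ``elementary result'' to be used without mention), so the only basis for comparison is correctness. Your arguments for parts (1) and (2) are sound: passing to a finite-dimensional $U_\mathbb F(\mathfrak g)$-submodule, using Weyl-group symmetry of weights to get $\lambda\in P^+$, observing that a vector of $V_\lambda$ is a $U_\mathbb F(\mathfrak g)$-highest-weight vector and hence killed by $(x_\alpha^-)^{(s)}$ for $s>\lambda(h_\alpha)$ via Theorem~\ref{t:rh}\eqref{t:rh.c}, and the triangular-decomposition argument \eqref{tridecomp} for part (2) are exactly what is needed. One small imprecision: the weight shift of $(x_\alpha^\pm\otimes b)^{(k)}$ for $b\ne 1$ is not literally \eqref{e:xactonws}, which concerns $(x_\alpha^\pm)^{(k)}$ only; it follows from Lemma~\ref{commutrels}\eqref{commutrels2}. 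You flag this yourself at the end, so it is a citation issue rather than a logical one.

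Part (3), however, has two genuine gaps. First, you claim that choosing $\lambda$ maximal in the finite set $\mathrm{wt}(V)$ ``gives $\mathrm{wt}(V)\subset\lambda-Q^+$ immediately.'' It does not: $\le$ is only a partial order, and a maximal weight need not dominate incomparable ones (for $\mathfrak g$ of rank at least $2$, the $U_\mathbb F(\mathfrak g)$-module $V_\mathbb F(\omega_1)\oplus V_\mathbb F(\omega_2)$ has maximal weights $\omega_1,\omega_2$, neither of which dominates the other). The containment is precisely where irreducibility over $U_\mathbb F(\mathfrak g_{\mathcal A})$ must enter: maximality gives only $U_\mathbb F(\mathfrak n_{\mathcal A}^+)^0V_\lambda=0$, since monomials in the augmentation ideal raise weights strictly; then irreducibility gives $V=U_\mathbb F(\mathfrak g_{\mathcal A})v$ for any $0\ne v\in V_\lambda$, and your part (2) yields $\mathrm{wt}(V)\subset\lambda-Q^+$. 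Second, your argument that $\dim V_\lambda=1$ fails: applying Theorem~\ref{t:rh}\eqref{t:rh.weights} to the $U_\mathbb F(\mathfrak g)$-submodule generated by one vector of $V_\lambda$ only shows that \emph{this submodule} has one-dimensional $\lambda$-space; since $V$ is irreducible only over $U_\mathbb F(\mathfrak g_{\mathcal A})$, its restriction to $U_\mathbb F(\mathfrak g)$ may contain the highest weight $\lambda$ with multiplicity, and $\mathfrak g$-theory alone cannot rule this out. The standard repair---the one the paper itself uses in the proof of Proposition~\ref{uniqueirr}---is to show that $V_\lambda$ is an irreducible $U_\mathbb F(\mathfrak h_{\mathcal A})$-module (a nonzero proper $U_\mathbb F(\mathfrak h_{\mathcal A})$-submodule $S_\lambda$ would satisfy $\left(U_\mathbb F(\mathfrak g_{\mathcal A})S_\lambda\right)_\lambda=S_\lambda$ by the triangular decomposition, contradicting $U_\mathbb F(\mathfrak g_{\mathcal A})S_\lambda=V$), and then invoke the commutativity of $U_\mathbb F(\mathfrak h_{\mathcal A})$ together with the algebraic closedness of $\mathbb F$ to conclude that a finite-dimensional irreducible module over it is one-dimensional.
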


Given a  left $U_\mathbb F(\mathfrak g)$-module $V$, by regarding $U_\mathbb F(\mathfrak g_{\mathcal A})$ as a right $U_\mathbb F(\mathfrak g)$-module via right multiplication, set
$$P(V):=U_\mathbb F(\mathfrak g_{\mathcal A})\otimes_{U_\mathbb F(\mathfrak g)} V.$$
Then, $P(V)$ is a left $U_\mathbb F(\mathfrak g_{\mathcal A})$-module by left multiplication and we have an isomorphism of $\mathbb F$-vector spaces
\begin{equation}\label{isov}
    P(V)\cong U_\mathbb F(\mathfrak g_{\mathcal A_+})\otimes V.
\end{equation}
where $\mathcal A_+$ is a fixed
vector space complement to the subspace $\mathbb C$ of $\mathcal A$. 

\begin{prop}\label{p:elem1}
	Let $V$ be a locally finite-dimensional $U_\mathbb F(\mathfrak g)$-module. Then,
	\begin{enumerate}
		\item $P(V)\in\mathcal  I_\mathbb F$.
		\item If $V\in\mathcal  I_\mathbb F$, then the map $P(V)\to V$ given by $u\otimes v\to uv$ is a surjective morphism of objects in $\mathcal  I_\mathbb F$. 
		\item If $V$ is projective in the category of locally finite-dimensional $U_\mathbb F(\mathfrak g)$-modules, then $P(V)$ is projective in $\mathcal  I_\mathbb F$. 
		\item For any $\lambda\in P^+$, we have $P(W_\mathbb F(\lambda))$ generated as a $U_\mathbb F(\mathfrak g_{\mathcal A})$-module by the element $p_\lambda:=1\otimes v_\lambda$ with defining relations
		\begin{equation*}\label{defproj}
		    U_\mathbb F(\mathfrak n^+)^0 p_\lambda=0,\quad {h}{} p_\lambda= {\lambda(h)}{} p_\lambda,\quad (x^-_{\alpha})^{(s)}p_\lambda =0,
		\end{equation*} 
		for all $h\in U_\mathbb F(\mathfrak{h}),\ \alpha\in R^+,\ s,k\in\mathbb Z_+, s>\lambda(h_i)$.\hfill\qedsymbol
	\end{enumerate}
\end{prop}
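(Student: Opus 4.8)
**The plan is to prove the four parts of Proposition \ref{p:elem1} in sequence, using the abstract structure of the functor $P(-)$ together with the PBW-type decomposition \eqref{isov} and the explicit presentation of $W_\mathbb F(\lambda)$ from Theorem \ref{t:rh}(3).**

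For part (1), I would argue that $P(V)$ is locally finite-dimensional as a $U_\mathbb F(\mathfrak g)$-module. The key observation is that, via the isomorphism of $\mathbb F$-vector spaces $P(V) \cong U_\mathbb F(\mathfrak g_{\mathcal A_+}) \otimes V$ from \eqref{isov}, one can filter $U_\mathbb F(\mathfrak g_{\mathcal A_+})$ by finite-dimensional pieces. Since $V$ itself is a direct sum of finite-dimensional irreducibles and $U_\mathbb F(\mathfrak g)$ acts on the $V$-factor together with the standard action on the bounded-degree part of $U_\mathbb F(\mathfrak g_{\mathcal A_+})$, each element of $P(V)$ sits inside a finite-dimensional $U_\mathbb F(\mathfrak g)$-stable subspace. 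Concretely, I would use the commutation relations in Lemma \ref{commutrels}(1)--(3) to show that applying divided powers $(x^\pm_\alpha)^{(k)}$ to a fixed monomial $u \otimes v$ produces only monomials of bounded degree, so the $U_\mathbb F(\mathfrak g)$-submodule generated by $u\otimes v$ is finite-dimensional. This gives $P(V) \in \mathcal I_\mathbb F$.

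Parts (2) and (3) follow from general adjunction/base-change principles. For (2), the multiplication map $u \otimes v \mapsto uv$ is well-defined precisely because the tensor product is taken over $U_\mathbb F(\mathfrak g)$, and it is a $U_\mathbb F(\mathfrak g_{\mathcal A})$-module map by associativity; surjectivity is immediate since $1 \otimes v \mapsto v$. For (3), I would invoke that $P(-) = U_\mathbb F(\mathfrak g_{\mathcal A}) \otimes_{U_\mathbb F(\mathfrak g)} (-)$ is left adjoint to the restriction functor $\mathcal I_\mathbb F \to \{\text{loc.\ fin.\ } U_\mathbb F(\mathfrak g)\text{-mod}\}$. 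Since $U_\mathbb F(\mathfrak g_{\mathcal A})$ is free as a right $U_\mathbb F(\mathfrak g)$-module by the PBW Theorem \ref{gAforms} (giving exactness of $P$), the left adjoint of an exact restriction functor sends projectives to projectives, so projectivity of $V$ lifts to projectivity of $P(V)$ in $\mathcal I_\mathbb F$.

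Part (4) is where the real work lies, and I expect it to be \textbf{the main obstacle}. The plan is to show that $P(W_\mathbb F(\lambda))$ is generated by $p_\lambda = 1 \otimes v_\lambda$ subject to exactly the listed relations. Generation is clear since $W_\mathbb F(\lambda) = U_\mathbb F(\mathfrak g) v_\lambda$ and $U_\mathbb F(\mathfrak g_{\mathcal A}) = U_\mathbb F(\mathfrak g_{\mathcal A}) U_\mathbb F(\mathfrak g)$. That $p_\lambda$ satisfies the three families of relations follows by transporting the defining relations of $W_\mathbb F(\lambda)$ (the ideal $I_\mathbb F(\lambda)$ from Theorem \ref{t:rh}(3)) across the tensor product, noting that the relations involve only $U_\mathbb F(\mathfrak g)$-elements which act on the $V$-factor. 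The hard direction is the \emph{universality}: given any $U_\mathbb F(\mathfrak g_{\mathcal A})$-module $M$ with a vector $m$ satisfying these relations, I must construct a well-defined map $P(W_\mathbb F(\lambda)) \to M$ sending $p_\lambda \mapsto m$. I would first use the relations to build a $U_\mathbb F(\mathfrak g)$-map $W_\mathbb F(\lambda) \to M$, $v_\lambda \mapsto m$, via the universal property in Theorem \ref{t:rh}(3); then the adjunction/universal property of $P$ extends this uniquely to the desired $U_\mathbb F(\mathfrak g_{\mathcal A})$-map. The delicate point is verifying that no \emph{additional} relations are forced in $P(W_\mathbb F(\lambda))$ beyond those inherited from $W_\mathbb F(\lambda)$ — this requires the freeness of $U_\mathbb F(\mathfrak g_{\mathcal A})$ over $U_\mathbb F(\mathfrak g)$ (Theorem \ref{gAforms}) so that the induced module is genuinely ``freely generated'' over the complementary directions $U_\mathbb F(\mathfrak g_{\mathcal A_+})$, with the relations on $p_\lambda$ accounting precisely for the $U_\mathbb F(\mathfrak g)$-structure and nothing more.
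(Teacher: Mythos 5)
Your overall strategy --- treating $P(-)$ as an induction functor, using the decomposition \eqref{isov} for part (1), the hom-tensor adjunction for part (3), and transporting the presentation of $W_\mathbb F(\lambda)$ across the induction for part (4) --- is exactly the approach the paper intends: its own ``proof'' is a one-line citation of the characteristic-zero argument of \cite[Proposition 3]{CFK}, which proceeds just as you outline. Parts (2), (3) and (4) of your plan are sound, with two remarks: freeness of $U_\mathbb F(\mathfrak g_{\mathcal A})$ over $U_\mathbb F(\mathfrak g)$ is not the operative ingredient in either (3) or (4). For (3) one only needs exactness of the \emph{right} adjoint (restriction), which is automatic; for (4), right-exactness of the tensor product already gives $P(W_\mathbb F(\lambda))\cong U_\mathbb F(\mathfrak g_{\mathcal A})/U_\mathbb F(\mathfrak g_{\mathcal A})I_\mathbb F(\lambda)$, which \emph{is} the asserted presentation, so your worry about ``additional relations'' dissolves; your two-sided universal-property argument is an equivalent formalization of the same fact.

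The genuine gap is in part (1). You claim that applying divided powers to a fixed monomial ``produces only monomials of bounded degree, so the $U_\mathbb F(\mathfrak g)$-submodule generated by $u\otimes v$ is finite-dimensional.'' Bounded degree does not imply finite dimension: when $\mathcal A$ is infinite-dimensional (the typical case here), already the span of the degree-one elements $x_\alpha^-\otimes b$, $b\in\mathcal B$, is infinite-dimensional, so the degree filtration of $U_\mathbb F(\mathfrak g_{\mathcal A_+})$ does \emph{not} have finite-dimensional pieces, contrary to what your sketch asserts. What must be added is control of the $\mathcal A$-support: one needs that the straightening process keeps everything inside $U_\mathbb F(\mathfrak g\otimes\mathcal A')$ for a finite-dimensional subspace $\mathcal A'\subseteq\mathcal A$ depending only on $u$. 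This is precisely where the subtlety sits, because the mixed-sign straightening (Lemma \ref{basicrel}, which your cited Lemma \ref{commutrels}(1)--(3) does not cover) creates \emph{new} elements of $\mathcal A$: commuting $(x_\alpha^+)^{(k)}$ past $(x_\alpha^-\otimes b)^{(s)}$ produces terms involving $x_\alpha^-\otimes b^{j+1}$ and $\Lambda_{\alpha,b,r}$, i.e.\ powers of $b$. These exponents are bounded by the degree, so your argument is repairable (take $\mathcal A'$ to be the span of products of the finitely many basis elements occurring in $u$ with exponents bounded by $\deg u$); alternatively, and more cleanly --- this is the route of \cite{CFK} --- one observes that $U_\mathbb F(\mathfrak g_{\mathcal A})$ is locally finite under the Hopf-algebra adjoint action of $U_\mathbb F(\mathfrak g)$ (the adjoint action preserves the $\mathbb Z$-form), that the $U_\mathbb F(\mathfrak g)$-action on $P(V)\cong U_\mathbb F(\mathfrak g_{\mathcal A_+})\otimes V$ is the tensor product of the adjoint action with the given action on $V$, and that a tensor product of two locally finite modules is locally finite.
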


The proof of the above proposition is analogous to its characteristic zero counterpart \cite[Proposition 3]{CFK}.

\begin{rem}
	In characteristic zero $P(V)$ is always projective since all locally finite-dimensional $\mathfrak g$-modules are completely reducible and, hence, projective in the category of locally finite-dimensional $\mathfrak g$-modules. \hfill$\diamond$
\end{rem}

Given $\nu\in P^+$ and $V\in\mathcal  I_\mathbb F$, let $V^\nu\in\mathcal  I_\mathbb F$ be the unique  maximal $U_\mathbb F(\mathfrak g_{\mathcal A})$-quotient of
$V$ satisfying
\begin{equation}\label{defglobweylext}
    \mathrm{wt} (V^\nu)\subset \nu-Q^+,
\end{equation} 
or, equivalently,
$$V^\nu= V/\sum_{\mu\nleq\nu}U_\mathbb F(\mathfrak g_{\mathcal A})V_\mu.$$

Notice that any morphism $\pi:V\to V'$ of objects in $\mathcal  I_\mathbb F$ induces a morphism $\pi^\nu: V^\nu\to (V')^\nu$. 

Let $\mathcal  I_\mathbb F^\nu$ be the full subcategory of objects $V\in\mathcal  I_\mathbb F$ such that $V=V^\nu$. It follows from the finite-dimensional representation theory of hyperalgebras that
\begin{equation}\label{finiteset} V\in\mathcal  I_\mathbb F^\nu\implies \#\mathrm{wt} (V)<\infty,\end{equation}
since the weights of $V$ are bounded above and it is a locally finite-dimensional module. This implies that $V$ is a direct sum of finite-dimensional simple $U_\mathbb F(\mathfrak g)$-modules.

\

The next result is immediate.

\begin{lem} Let $\nu\in P^+$.

\begin{enumerate}
    \item $\mathcal  I_\mathbb F^\nu$ is an abelian category, but not a tensor subcategory of $\mathcal  I_\mathbb F$.
    \item If $V$ is projective in $\mathcal  I_\mathbb F^\nu$, then $P(V)^\nu$ is projective in $\mathcal  I_\mathbb F^\nu$
\end{enumerate}
 \hfill\qedsymbol
\end{lem}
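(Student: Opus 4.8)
The plan for part (1) is to observe that the defining condition $\mathrm{wt}(V)\subset\nu-Q^+$ is stable under passage to submodules and to quotients. Since $\mathcal I_\mathbb F$ is abelian, to prove $\mathcal I_\mathbb F^\nu$ abelian it suffices to check that it is a full subcategory closed under the kernels, cokernels and finite direct sums formed in $\mathcal I_\mathbb F$: a kernel is a submodule and a cokernel a quotient, so their weights remain in $\nu-Q^+$, while the weight set of a finite direct sum is the union of the weight sets. Thus $\mathcal I_\mathbb F^\nu$ inherits the abelian structure of $\mathcal I_\mathbb F$, and in particular the inclusion $\iota\colon\mathcal I_\mathbb F^\nu\hookrightarrow\mathcal I_\mathbb F$ is exact, a point I reuse in part (2). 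To see that it is not a tensor subcategory (for $\nu\neq0$) I would argue by a weight count. Choose a nonzero $M\in\mathcal I_\mathbb F^\nu$ with $M_\nu\neq0$, for instance $M=P(W_\mathbb F(\nu))^\nu$, which is nonzero by Proposition \ref{p:elem1}(4). Then $M\otimes M\in\mathcal I_\mathbb F$ has $2\nu$ among its weights, since $M_\nu\otimes M_\nu\subseteq (M\otimes M)_{2\nu}$ is nonzero, whereas $2\nu\nleq\nu$ because $\nu$ is a nonzero dominant weight. Hence $M\otimes M\notin\mathcal I_\mathbb F^\nu$.

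For part (2) the plan is to realize $V\mapsto P(V)^\nu$ as a composite of functors preserving projectives. The truncation $(-)^\nu$ is the reflector onto $\mathcal I_\mathbb F^\nu$: if $N\in\mathcal I_\mathbb F^\nu$, any morphism $V\to N$ annihilates each weight space $V_\mu$ with $\mu\nleq\nu$, hence annihilates $\sum_{\mu\nleq\nu}U_\mathbb F(\mathfrak g_{\mathcal A})V_\mu$ and factors uniquely through $V^\nu$; this gives a natural isomorphism $\mathrm{Hom}_{\mathcal I_\mathbb F}(V,\iota N)\cong\mathrm{Hom}_{\mathcal I_\mathbb F^\nu}(V^\nu,N)$, so $(-)^\nu$ is left adjoint to the exact functor $\iota$ and therefore sends projectives of $\mathcal I_\mathbb F$ to projectives of $\mathcal I_\mathbb F^\nu$. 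On the other side, Proposition \ref{p:elem1}(3) records that induction $P$ carries projective locally finite-dimensional $U_\mathbb F(\mathfrak g)$-modules to projectives of $\mathcal I_\mathbb F$, this being the assertion that $P$, left adjoint to the exact restriction $\mathrm{Res}$, preserves projectives. Granting that $\mathrm{Res}\,V$ is projective enough for $P(V)=P(\mathrm{Res}\,V)$ to be projective in $\mathcal I_\mathbb F$, applying $(-)^\nu$ then yields that $P(V)^\nu$ is projective in $\mathcal I_\mathbb F^\nu$.

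The main obstacle is precisely the projectivity of the restriction $\mathrm{Res}\,V$, and it is the one place where the characteristic-zero proof does not transcribe. In characteristic zero one invokes complete reducibility: every locally finite-dimensional $\mathfrak g$-module is projective, so $\mathrm{Res}\,V$ is automatically projective and the argument closes; in positive characteristic this fails and $\mathrm{Res}\,V$ is typically not projective as a $U_\mathbb F(\mathfrak g)$-module. I would therefore not route the proof through absolute projectivity of $\mathrm{Res}\,V$, but through the relative projectivity furnished by the adjunction $P\dashv\mathrm{Res}$. The formal observation is that composing the two adjunctions makes $U\mapsto P(U)^\nu$ left adjoint to the exact restriction $\mathrm{Res}\colon\mathcal I_\mathbb F^\nu\to U_\mathbb F(\mathfrak g)\text{-mod}$, so that $P(U)^\nu$ is projective in $\mathcal I_\mathbb F^\nu$ whenever $U$ is projective relative to $\mathrm{Res}$-split surjections; one then uses the canonical surjection $P(V)^\nu\to V$ of Proposition \ref{p:elem1}(2), split by projectivity of $V$, to exhibit $V$ as a direct summand of $P(\mathrm{Res}\,V)^\nu$ carrying exactly this relative projectivity. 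Making this comparison precise, so that the relatively projective object produced really is projective in the truncated category $\mathcal I_\mathbb F^\nu$, is the technical heart of the argument, and is where the finiteness $\#\mathrm{wt}(V)<\infty$ from \eqref{finiteset} and the explicit presentation of $P(W_\mathbb F(\lambda))$ in Proposition \ref{p:elem1}(4) must enter.
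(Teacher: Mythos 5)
Your proof of part (1) is correct and coincides with the argument the paper treats as immediate (the paper prints no proof for this lemma): the weight condition \eqref{defglobweylext} passes to submodules, quotients, and finite direct sums formed in $\mathcal I_\mathbb F$, and the tensor-square weight count $M_\nu\otimes M_\nu\subseteq (M\otimes M)_{2\nu}$ with $2\nu\nleq\nu$ rules out tensor-closure. Your explicit restriction to $\nu\neq 0$ is a caveat the paper's wording omits but needs, since $\mathcal I_\mathbb F^0$ actually is closed under tensor products.

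In part (2) your skeleton --- $(-)^\nu$ is left adjoint to the exact inclusion $\iota$ and hence preserves projectives, while Proposition \ref{p:elem1}(3) says $P$ sends projective locally finite-dimensional $U_\mathbb F(\mathfrak g)$-modules to projectives of $\mathcal I_\mathbb F$ --- is exactly the paper's intended (unwritten) two-step proof. The genuine gap is in your attempted repair of the hypothesis mismatch, and you concede as much. Splitting the canonical surjection $P(V)^\nu\twoheadrightarrow V$ by projectivity of $V$ in $\mathcal I_\mathbb F^\nu$ exhibits $V$ as a direct summand of $P(\mathrm{Res}\,V)^\nu$; this moves projectivity in the wrong direction, since what must be shown is that $P(V)^\nu$ itself is projective, and an object possessing a projective direct summand need not be projective. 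Nor can the relative projectivity of $P(U)^\nu$ with respect to $\mathrm{Res}$-split surjections be promoted to projectivity in $\mathcal I_\mathbb F^\nu$: via the composite adjunction, projectivity of $P(V)^\nu$ amounts to lifting $U_\mathbb F(\mathfrak g)$-maps $\mathrm{Res}\,V\to\mathrm{Res}\,B$ along restricted surjections $\mathrm{Res}\,A\to\mathrm{Res}\,B$ for arbitrary surjections $A\twoheadrightarrow B$ in $\mathcal I_\mathbb F^\nu$, whereas projectivity of $V$ in $\mathcal I_\mathbb F^\nu$ only lifts $U_\mathbb F(\mathfrak g_{\mathcal A})$-maps; in positive characteristic there is no complete reducibility to bridge the two. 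The resolution is not more machinery but a different reading of the hypothesis: $V$ is to be projective in the category of locally finite-dimensional $U_\mathbb F(\mathfrak g)$-modules, i.e.\ precisely the hypothesis of Proposition \ref{p:elem1}(3). This reading is forced by the Remark following the lemma --- ``in the characteristic zero setting, part (2) is true for any $V$'' exactly because complete reducibility there makes every locally finite-dimensional $\mathfrak g$-module projective --- and under it the two functorial steps you already set up constitute a complete proof; no relative-projectivity argument, no appeal to \eqref{finiteset}, and no use of the presentation in Proposition \ref{p:elem1}(4) is required.
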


\begin{rem}
    In the characteristic zero setting, part (2) is true for any $V$, cf. \cite[Corollary 3.2]{CFK}.
\end{rem}

\subsection{The global Weyl modules} We are now able to define the main object of study for this paper. It is a natural extension of the definition in the characteristic zero setting to the hyperalgebras:

\begin{defn} 
    The \textit{global Weyl module} of weight $\lambda\in P^+$ for $U_\mathbb F(\mathfrak g_{\mathcal A})$ is defined as
	$$\pmb W_\mathbb F(\lambda):=P(W_\mathbb F(\lambda))^\lambda,$$
\end{defn}

Let $w_\lambda$ be the image of $p_\lambda$ in $\pmb W_\mathbb F(\lambda)$.
The following proposition is essentially an immediate consequence of Proposition \ref{p:elem1} and provides a definition of $\pmb W_\mathbb F(\lambda)$ via generators and relations.  

\begin{rem}
The original definition given in \cite{CP} in the characteristic zero context was via generator and relations.
\end{rem}

\begin{prop}\label{p:weyl} 
    For  $\lambda\in P^+$, the module $\pmb W_\mathbb F(\lambda)$ is generated by $w_\lambda$ with the defining relations: 
    \begin{equation}\label{weyldef}
	    U_\mathbb F(\mathfrak n_{\mathcal A}^+)^0 w_\lambda=0,\quad  {h} w_\lambda={\lambda(h)}w_\lambda,\quad (x^-_{\alpha})^{(s)}w_\lambda =0,\ h\in U_\mathbb F(\mathfrak{h}), \ \alpha\in R^+,\  s,k\in\mathbb Z_+,\ s>\lambda(h_i).
	\end{equation}
\end{prop}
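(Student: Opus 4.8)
The plan is to prove the presentation by exhibiting mutually inverse surjections between $\pmb W_\mathbb F(\lambda)$ and the module $\tilde W$ defined by generators and relations according to \eqref{weyldef}; that is, $\tilde W$ is the quotient of $U_\mathbb F(\mathfrak g_{\mathcal A})$ by the left ideal generated by $U_\mathbb F(\mathfrak n_{\mathcal A}^+)^0$, by $h-\lambda(h)$ for $h\in U_\mathbb F(\mathfrak h)$, and by $(x_\alpha^-)^{(s)}$ for $\alpha\in R^+$, $s>\lambda(h_\alpha)$, with $w_\lambda$ the image of $1$. By construction $\tilde W$ enjoys the universal property that any $U_\mathbb F(\mathfrak g_{\mathcal A})$-module containing an element satisfying \eqref{weyldef} is a quotient of it.

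First I would check that $w_\lambda\in\pmb W_\mathbb F(\lambda)$ satisfies \eqref{weyldef} and generates $\pmb W_\mathbb F(\lambda)$. Generation is immediate, since $p_\lambda$ generates $P(W_\mathbb F(\lambda))$ and hence its image $w_\lambda$ generates the quotient. The relations $h w_\lambda=\lambda(h)w_\lambda$ and $(x_\alpha^-)^{(s)}w_\lambda=0$ are inherited from the defining relations of $P(W_\mathbb F(\lambda))$ in Proposition \ref{p:elem1}(4). The only new relation to verify is $U_\mathbb F(\mathfrak n_{\mathcal A}^+)^0 w_\lambda=0$: since $w_\lambda$ has weight $\lambda$ and every element of $U_\mathbb F(\mathfrak n_{\mathcal A}^+)^0$ is a combination of monomials raising the weight by some $\beta\in Q^+\setminus\{0\}$, the vector $U_\mathbb F(\mathfrak n_{\mathcal A}^+)^0 w_\lambda$ lies in weight spaces $\lambda+\beta$, none of which belongs to $\lambda-Q^+$; as $\mathrm{wt}(\pmb W_\mathbb F(\lambda))\subset\lambda-Q^+$ by \eqref{defglobweylext}, this forces $U_\mathbb F(\mathfrak n_{\mathcal A}^+)^0 w_\lambda=0$. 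The universal property of $\tilde W$ then yields a surjection $\psi\colon\tilde W\twoheadrightarrow\pmb W_\mathbb F(\lambda)$ carrying generator to generator.

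For the reverse map I would use that $w_\lambda\in\tilde W$ satisfies the weaker defining relations of $P(W_\mathbb F(\lambda))$ from Proposition \ref{p:elem1}(4): indeed $U_\mathbb F(\mathfrak n^+)^0\subseteq U_\mathbb F(\mathfrak n_{\mathcal A}^+)^0$ under the embedding $\mathfrak n^+\hookrightarrow\mathfrak n_{\mathcal A}^+$, $x\mapsto x\otimes 1$, so $U_\mathbb F(\mathfrak n^+)^0 w_\lambda=0$, while the other two relations are literally among those defining $\tilde W$. Hence there is a surjection $P(W_\mathbb F(\lambda))\twoheadrightarrow\tilde W$. Two facts now let this descend to $\pmb W_\mathbb F(\lambda)$: first, $\tilde W$ is a quotient of $P(W_\mathbb F(\lambda))$, which lies in $\mathcal I_\mathbb F$ by Proposition \ref{p:elem1}(1), so $\tilde W\in\mathcal I_\mathbb F$ because $\mathcal I_\mathbb F$ is closed under quotients; second, $\tilde W=U_\mathbb F(\mathfrak g_{\mathcal A})w_\lambda$ with $U_\mathbb F(\mathfrak n_{\mathcal A}^+)^0 w_\lambda=0$, so Lemma \ref{l:elem1}(2) gives $\mathrm{wt}(\tilde W)\subset\lambda-Q^+$, i.e. $\tilde W=\tilde W^\lambda$. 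Applying the functor $(\,\cdot\,)^\lambda$ to $P(W_\mathbb F(\lambda))\twoheadrightarrow\tilde W$ therefore produces a surjection $\bar\phi\colon\pmb W_\mathbb F(\lambda)=P(W_\mathbb F(\lambda))^\lambda\twoheadrightarrow\tilde W^\lambda=\tilde W$, again sending $w_\lambda$ to $w_\lambda$.

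Finally I would observe that $\psi$ and $\bar\phi$ are mutually inverse: each composite fixes the generator $w_\lambda$ and hence acts as the identity, since $w_\lambda$ generates both modules. This gives $\tilde W\cong\pmb W_\mathbb F(\lambda)$, which is precisely the asserted presentation. The argument is essentially formal once the infrastructure is in place; the step requiring the most care is the descent through $(\,\cdot\,)^\lambda$, which hinges on simultaneously establishing $\tilde W\in\mathcal I_\mathbb F$ and the weight bound $\mathrm{wt}(\tilde W)\subset\lambda-Q^+$ so that $\tilde W$ is left unchanged by $(\,\cdot\,)^\lambda$. I expect no genuine obstacle beyond correctly invoking these two inputs together with the weight-raising property that yields the new relation $U_\mathbb F(\mathfrak n_{\mathcal A}^+)^0 w_\lambda=0$.
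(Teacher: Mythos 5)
Your proof is correct and takes essentially the same approach as the paper's: verify the relations in $\pmb W_\mathbb F(\lambda)$ (the $U_\mathbb F(\mathfrak n_{\mathcal A}^+)^0$ relation via the weight bound \eqref{defglobweylext}, the others inherited from $p_\lambda$), then realize the abstractly presented module as a quotient of $P(W_\mathbb F(\lambda))$ whose weights lie in $\lambda-Q^+$, so that maximality of $\pmb W_\mathbb F(\lambda)$ --- equivalently, applying the functor $(\,\cdot\,)^\lambda$ as you do --- makes it a quotient of $\pmb W_\mathbb F(\lambda)$. Your explicit appeal to Lemma \ref{l:elem1}(2) for the weight bound and your closing check that the two generator-preserving surjections are mutually inverse merely spell out steps the paper leaves implicit.
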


\proof The relation $U_\mathbb F(\mathfrak n_{\mathcal A}^+)^0 w_\lambda=0$ follows directly from the fact that $\pmb W_\mathbb F(\lambda) \subseteq \lambda-Q^+$. The other relations are valid since they are already satisfied by $p_\lambda$.  It remains to see that these are in fact all the relations we have for $\pmb W_\mathbb F(\lambda)$: let $W'$ be
the module generated by an element $w_\lambda$ with the giving relations. By Proposition \ref{p:elem1}, $W'$ is a quotient of $P(W_\mathbb F(\lambda))$. Further, $\mathrm{wt} (W')\subseteq \lambda-Q^+$ and it implies
that $W'$ satisfies \eqref{defglobweylext}. Finally, the maximality of $\pmb W_\mathbb F(\lambda)$ implies that $W'$ a quotient of $\pmb W_\mathbb F(\lambda)$.
\endproof

\subsection{The Weyl functor} 

Consider the annihilator of $w_\lambda$ in the $U_\mathbb F(\mathfrak g_{\mathcal A})$-module $\pmb W_\mathbb F(\lambda)$, i.e. 
\begin{equation*}
	\operatorname{Ann}_{U_\mathbb F(\mathfrak g_{\mathcal A})}(w_\lambda)=\left\{u\in U_\mathbb F(\mathfrak g_{\mathcal A}): uw_\lambda=0\right\} 
\end{equation*}
and set \begin{equation*}
  \operatorname{Ann}_{U_\mathbb F(\mathfrak h_{\mathcal A})}(w_\lambda) :=\operatorname{Ann}_{U_\mathbb F(\mathfrak g_{\mathcal A})}(w_\lambda)\cap U_\mathbb F(\mathfrak h_{\mathcal A}).
\end{equation*}
Clearly $\operatorname{Ann}_{U_\mathbb F(\mathfrak h_{\mathcal A})}(w_\lambda)$ is an ideal of $U_\mathbb F(\mathfrak h_{\mathcal A})$ and we set $${\mathcal A}_\mathbb F^\lambda:=\frac{U_\mathbb F(\mathfrak h_{\mathcal A})}{\operatorname{Ann}_{U_\mathbb F(\mathfrak h_{\mathcal A})}(w_\lambda)}.$$
	
Now we regard $\pmb W_\mathbb F(\lambda)$ as a right module for $U_\mathbb F(\mathfrak h_{\mathcal A})$ as follows:
$$(uw_\lambda)\cdot x:=ux\cdot w_\lambda, \; \forall  u\in U_\mathbb F(\mathfrak g_{\mathcal A}),\ x\in U_\mathbb F(\mathfrak h_{\mathcal A}).$$
To see that this action is well defined, one must prove that:
\begin{equation*}
	U_\mathbb F(\mathfrak n_{\mathcal A}^+)^0xw_\lambda=0,\ \ \left( {h} - \lambda(h)\right) xw_\lambda=0, \ \ (x_\alpha^-)^{(s)}xw_\lambda=0,
\end{equation*}
for all $h\in U_\mathbb F(\mathfrak{h})$, $\alpha\in R^+, x\in U_\mathbb F(\mathfrak h_{\mathcal A})$, and $s,k\in\mathbb Z_+, s>\lambda(h_i)$. However, the validity of the first two equalities are obvious and the third one follows from  $(x_\alpha^+)^{(s)}(xw_\lambda)=0$ for all $s>0$ and  $\pmb W_\mathbb F(\lambda)\in\mathcal  I_\mathbb F$. 
	
Moreover, for all $\mu\in P$, the subspaces $\pmb W_\mathbb F(\lambda)_\mu$ are $U_\mathbb F(\mathfrak h_{\mathcal A})$-modules for both the  left and right actions and
$$\operatorname{Ann}_{U_\mathbb F(\mathfrak h_{\mathcal A})}(w_\lambda)=\left\{u\in U_\mathbb F(\mathfrak h_{\mathcal A}): w_\lambda u=0=uw_\lambda\right\}=\left\{u\in U_\mathbb F(\mathfrak h_{\mathcal A}): \pmb W_\mathbb F(\lambda)u=0\right\}.$$
Then $\pmb W_\mathbb F(\lambda)$ is a $(U_\mathbb F(\mathfrak g_{\mathcal A}),{\mathcal A}_\mathbb F^\lambda)$-bimodule and each subspace $\pmb W_\mathbb F(\lambda)_\mu$ is a right
${\mathcal A}_\mathbb F^\lambda$-module. In particular,  $\pmb W_\mathbb F(\lambda)_\lambda$ is a ${\mathcal A}_\mathbb F^\lambda$-bimodule and we have an isomorphism of bimodules $$\pmb W_\mathbb F(\lambda)_\lambda\cong_{{\mathcal A}_\mathbb F^\lambda} {\mathcal A}_\mathbb F^\lambda.$$
	
\begin{defn} \label{loc}
    Let $\operatorname{mod} {\mathcal A}_\mathbb F^\lambda$ be the category of left ${\mathcal A}_\mathbb F^\lambda$-modules and 
	let 
	${\mathbf W}_\mathbb F^\lambda:\operatorname{mod}  {\mathcal A}_\mathbb F^\lambda\to\mathcal  I_\mathbb F^\lambda$ be the right exact functor given by
	$${\mathbf W}_\mathbb F^\lambda(M)=\pmb W_\mathbb F(\lambda)\otimes_{{\mathcal A}_\mathbb F^\lambda} M \text{ and } {\mathbf W}_\mathbb F^\lambda f=1\otimes f,$$
	where $M\in\operatorname{mod}  {\mathcal A}_\mathbb F^\lambda$ and $f\in\operatorname{Hom}_{{\mathcal A}_\mathbb F^\lambda}(M,M')$ for some $M'\in\operatorname{mod}  {\mathcal A}_\mathbb F^\lambda$. We call this functor the \textit{Weyl functor}.
\end{defn}
	
Once $\pmb W_\mathbb F(\lambda)\in \mathcal  I_\mathbb F$, the $U_\mathbb F(\mathfrak g)$-action on ${\mathbf W}_\mathbb F^\lambda (M)$ is also locally finite and so ${\mathbf W}_\mathbb F^\lambda (M)\in\mathcal  I_\mathbb F^\lambda$. The preceding discussion also shows that
$${\mathbf W}_\mathbb F^\lambda({\mathcal A}_\mathbb F^\lambda)\cong_{U_\mathbb F(\mathfrak g_{\mathcal A})} \pmb W_\mathbb F(\lambda) \text{ and }
{\mathbf W}_\mathbb F^\lambda (M)_\mu \cong_{{\mathcal A}_\mathbb F^\lambda}\pmb W_\mathbb F(\lambda)_\mu\otimes_{{\mathcal A}_\mathbb F^\lambda} M,$$
for all $\mu\in P,\ M\in\operatorname{mod}  {\mathcal A}_\mathbb F^\lambda$.

\begin{lem}\label{universal} 
	For all $\lambda\in P^+$ and $V\in\mathcal  I_\mathbb F^\lambda$ we have  $\operatorname{Ann}_{U_\mathbb F(\mathfrak h_{\mathcal A})}(w_\lambda)V_\lambda=0$.
\end{lem}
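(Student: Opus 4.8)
The plan is to exploit the universal property of the global Weyl module $\pmb W_\mathbb F(\lambda)$ established in Proposition \ref{p:weyl}. The core observation is that every vector in the top weight space $V_\lambda$ of an object $V\in\mathcal I_\mathbb F^\lambda$ satisfies exactly the defining relations of the generator $w_\lambda$, and hence gives rise to a morphism out of $\pmb W_\mathbb F(\lambda)$ through which the annihilation $uw_\lambda=0$ can be transported to $V_\lambda$.

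First I would fix a vector $v\in V_\lambda$ (the case $V_\lambda=0$ being vacuous) and verify that $v$ satisfies the three relations in \eqref{weyldef}. Since $V=V^\lambda$, we have $\mathrm{wt}(V)\subset\lambda-Q^+$, so Lemma \ref{l:elem1}(1) applies and yields $U_\mathbb F(\mathfrak n_{\mathcal A}^+)^0v=0$ together with $(x_\alpha^-)^{(s)}v=0$ for all $\alpha\in R^+$ and $s>\lambda(h_\alpha)$. The remaining relation $hv=\lambda(h)v$ for $h\in U_\mathbb F(\mathfrak h)$ holds simply because $v$ is a weight vector of weight $\lambda$. Thus $v$ fulfills precisely the relations that define $w_\lambda$.

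By the universal property in Proposition \ref{p:weyl}, the assignment $w_\lambda\mapsto v$ then extends to a morphism $\phi_v\colon\pmb W_\mathbb F(\lambda)\to V$ of $U_\mathbb F(\mathfrak g_{\mathcal A})$-modules. Now take any $u\in\operatorname{Ann}_{U_\mathbb F(\mathfrak h_{\mathcal A})}(w_\lambda)$, so that $uw_\lambda=0$. Applying $\phi_v$ and using that it is a module morphism gives $uv=u\,\phi_v(w_\lambda)=\phi_v(uw_\lambda)=\phi_v(0)=0$. Since $v\in V_\lambda$ was arbitrary, this proves $\operatorname{Ann}_{U_\mathbb F(\mathfrak h_{\mathcal A})}(w_\lambda)V_\lambda=0$.

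There is no substantial obstacle once the universal property is in hand; the only point requiring care is checking that an arbitrary $v\in V_\lambda$ genuinely satisfies \emph{all} the relations defining $\pmb W_\mathbb F(\lambda)$, which is exactly what Lemma \ref{l:elem1}(1) is designed to provide. It is worth emphasizing that the hypothesis $V\in\mathcal I_\mathbb F^\lambda$ (rather than merely $V\in\mathcal I_\mathbb F$) is essential here: it guarantees $\mathrm{wt}(V)\subset\lambda-Q^+$, hence that $\lambda$ is a maximal weight of $V$, which is precisely what forces the elements of $V_\lambda$ to be highest-weight vectors with the correct $(x_\alpha^-)^{(s)}$-annihilation and thus eligible to receive $w_\lambda$.
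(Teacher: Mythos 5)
Your proof is correct and is exactly the argument the paper intends: the paper's proof simply cites Lemma \ref{l:elem1} and Proposition \ref{p:weyl}, and your write-up fleshes out that citation — use Lemma \ref{l:elem1}(1) to check that any $v\in V_\lambda$ satisfies the defining relations \eqref{weyldef}, invoke the presentation of $\pmb W_\mathbb F(\lambda)$ from Proposition \ref{p:weyl} to get a morphism $w_\lambda\mapsto v$, and transport the annihilation along it. No gaps; this is the same approach, just made explicit.
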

	
\proof 
    This is immediate from Lemma \ref{l:elem1} and Proposition \ref{p:weyl}.
\endproof
	
From this lemma, the left action of $U_\mathbb F(\mathfrak h_{\mathcal A})$ on $V_\lambda$ induces a left action of ${\mathcal A}_\mathbb F^\lambda$ on $V_\lambda$. We denote the resulting ${\mathcal A}_\mathbb F^\lambda$-module by ${\mathbf R}_\mathbb F^\lambda (V)$. Further, given $f \in\operatorname{Hom}_{\mathcal  I_\mathbb F^\lambda}(V,V')$ the restriction $f_\lambda:V_\lambda\to V'_\lambda$ is a morphism of ${\mathcal A}_\mathbb F^\lambda$-modules and the rules
$$V\to {\mathbf R}_\mathbb F^\lambda( V)\text{ and } f\to {\mathbf R}_\mathbb F^\lambda (f) := f_\lambda$$ 
define the functor 
$${\mathbf R}_\mathbb F^\lambda:\mathcal  I_\mathbb F^\lambda\to\operatorname{mod}  {\mathcal A}_\mathbb F^\lambda$$
that is exact, since restricting $f$ to a weight space is exact. 

\
	
The next theorem establishes that the Weyl functors satisfy
properties similar to the ones satisfied by the Weyl functors defined in the non-hyper setting. Particularly, item (4) gives a categorical definition of ${\mathbf W}_\mathbb F^\lambda (M)$. Its proof is identical to the proof in the characteristic zero setting, cf. \cite[\S3.7]{CFK} and \cite[\S4]{FMS}. 

\begin{thm}\label{defeta} Let $\lambda\in P^+$.
	\begin{enumerate}
	
	    \item  Given $M\in\operatorname{mod} {\mathcal A}_\mathbb F^\lambda$, as left ${\mathcal A}_\mathbb F^\lambda$-modules, we have  
${\mathbf R}_\mathbb F^\lambda{\mathbf W}_\mathbb F^\lambda (M) \cong \ M$. In particular, we have an isomorphism of functors $\operatorname{id}_{{\mathcal A}_\mathbb F^\lambda}\cong {\mathbf R}_\mathbb F^\lambda{\mathbf W}_\mathbb F^\lambda$.
	    \item Let $V\in\mathcal  I_\mathbb F^\lambda$. There exists a canonical map of $U_\mathbb F(\mathfrak g_{\mathcal A})$-modules $\eta_V: {\mathbf W}_\mathbb F^\lambda{\mathbf R}_\mathbb F^\lambda(V)\to V$ such that $\eta:{\mathbf W}_\mathbb F^\lambda{\mathbf R}_\mathbb F^\lambda\Rightarrow \operatorname{id}_{\mathcal I_\mathbb F^\lambda}$ is a natural transformation of functors and ${\mathbf R}_\mathbb F^\lambda$ is a right adjoint to ${\mathbf W}_\mathbb F^\lambda$.
	    \item The functor ${\mathbf W}_\mathbb F^\lambda$ maps projective objects to projective objects.
	    
	    \item    Let $V\in\mathcal  I_\mathbb F^\lambda$. \ $V\cong{\mathbf W}_\mathbb F^\lambda{\mathbf R}_\mathbb F^\lambda(V)$ if, and only if, for all $U\in\mathcal  I_\mathbb F^\lambda$ with $U_\lambda=0$, we have 
	\begin{equation}\label{altweyldef} 
		\operatorname{Hom}_{\mathcal  I_\mathbb F^\lambda}(V,U)=\operatorname{Ext}^1_{\mathcal  I_\mathbb F^\lambda}(V, U)=0.
	\end{equation}
	    \item The functor ${\mathbf W}_\mathbb F^\lambda$ is exact if, and only if, for all $U\in\mathcal  I_\mathbb F^\lambda$ with $U_\lambda=0$ we have 
	\begin{equation}\label{equivexact} 
		\operatorname{Ext}^2_{\mathcal  I_\mathbb F^\lambda}({\mathbf W}_\mathbb F^\lambda(M), U)=0\ \ \forall \ M\in\operatorname{mod}  {\mathcal A}_\mathbb F^\lambda. 
	\end{equation}
	\end{enumerate}
	\hfill \qedsymbol
\end{thm}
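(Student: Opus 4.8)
The plan is to treat the five parts in order, each building on the previous, writing $W:={\mathbf W}_\mathbb F^\lambda$ and $R:={\mathbf R}_\mathbb F^\lambda$ for brevity. For (1), I would invoke the two isomorphisms recorded just after Definition \ref{loc}: since $W(M)_\lambda\cong_{\mathcal A_\mathbb F^\lambda}\pmb W_\mathbb F(\lambda)_\lambda\otimes_{\mathcal A_\mathbb F^\lambda}M$ and $\pmb W_\mathbb F(\lambda)_\lambda\cong\mathcal A_\mathbb F^\lambda$ as bimodules, one gets $RW(M)=W(M)_\lambda\cong M$, naturally in $M$, which is the asserted isomorphism of functors (so the unit of the adjunction is invertible). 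For (2) the crux is the bijection $\operatorname{Hom}_{\mathcal I_\mathbb F^\lambda}(W(M),V)\cong\operatorname{Hom}_{\mathcal A_\mathbb F^\lambda}(M,R(V))$: a $U_\mathbb F(\mathfrak g_{\mathcal A})$-morphism out of $W(M)=\pmb W_\mathbb F(\lambda)\otimes_{\mathcal A_\mathbb F^\lambda}M$ is determined by the images of $w_\lambda\otimes m$, which for weight reasons must lie in $V_\lambda=R(V)$; conversely an $\mathcal A_\mathbb F^\lambda$-linear map $M\to V_\lambda$ extends to such a morphism precisely because Lemma \ref{universal} ensures $\operatorname{Ann}_{U_\mathbb F(\mathfrak h_{\mathcal A})}(w_\lambda)$ kills $V_\lambda$, so the defining relations of $\pmb W_\mathbb F(\lambda)$ are respected. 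The counit $\eta_V$ is the image of $\operatorname{id}_{R(V)}$, explicitly $uw_\lambda\otimes v\mapsto uv$ for $v\in V_\lambda$, and naturality is immediate. Part (3) is then formal: $W$ is a left adjoint whose right adjoint $R$ is exact (restricting a morphism to a weight space is exact), so for projective $P$ the functor $\operatorname{Hom}_{\mathcal I_\mathbb F^\lambda}(W(P),-)\cong\operatorname{Hom}_{\mathcal A_\mathbb F^\lambda}(P,R(-))$ is exact, forcing $W(P)$ to be projective.

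For (4) I would analyze the four-term exact sequence $0\to K\to WR(V)\xrightarrow{\eta_V}V\to C\to 0$. Applying the exact functor $R$ and using the triangle identity together with part (1) shows $R(\eta_V)$ is an isomorphism, so $K_\lambda=C_\lambda=0$; moreover $WR(V)$ is generated over $U_\mathbb F(\mathfrak g_{\mathcal A})$ by $w_\lambda\otimes R(V)$, which lies in weight $\lambda$. For the forward direction, if $V\cong WR(V)=W(R(V))$, the adjunction gives $\operatorname{Hom}(V,U)\cong\operatorname{Hom}_{\mathcal A_\mathbb F^\lambda}(R(V),U_\lambda)=0$ when $U_\lambda=0$; and any extension $0\to U\to E\to V\to 0$ splits because applying the exact $R$ yields $R(E)\cong R(V)$, whence $WR(E)\to WR(V)$ is an isomorphism and naturality of $\eta$ produces a section of $E\to V$, giving $\operatorname{Ext}^1(V,U)=0$. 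For the converse, the $\operatorname{Hom}$-vanishing applied to the quotient map $V\to C$ kills $C$ (so $\eta_V$ is onto), and the $\operatorname{Ext}^1$-vanishing splits $0\to K\to WR(V)\to V\to 0$; since $WR(V)$ is generated by its weight-$\lambda$ space, which lands in the $V$-summand, the complement $K$ (with $K_\lambda=0$) must vanish, so $\eta_V$ is an isomorphism.

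Part (5) is where the real work lies, and I expect it to be the main obstacle. The plan is to pick a free resolution $P_\bullet\to M$ in $\operatorname{mod}\mathcal A_\mathbb F^\lambda$; since $W$ is right exact and preserves projectives by (3), the complex $W(P_\bullet)$ consists of projectives in $\mathcal I_\mathbb F^\lambda$ with $H_0=W(M)$ and $H_q=L_qW(M)$. Because $R$ is exact and $RW\cong\operatorname{id}$, one gets $R(L_qW(M))=L_q(\operatorname{id})(M)=0$ for $q\ge1$, i.e. $(L_qW(M))_\lambda=0$. The hyper-$\operatorname{Ext}$ spectral sequence of $W(P_\bullet)$ then reads
\begin{equation*}
    E_2^{p,q}=\operatorname{Ext}^p_{\mathcal I_\mathbb F^\lambda}(L_qW(M),U)\ \Longrightarrow\ \operatorname{Ext}^{p+q}_{\mathcal A_\mathbb F^\lambda}(M,R(U)),
\end{equation*}
the abutment being computed from $\operatorname{Hom}_{\mathcal I_\mathbb F^\lambda}(W(P_i),U)\cong\operatorname{Hom}_{\mathcal A_\mathbb F^\lambda}(P_i,R(U))$. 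When $U_\lambda=0$ we have $R(U)=0$, so the abutment vanishes. If $W$ is exact then $L_qW=0$ for $q\ge1$, the sequence collapses to the row $q=0$, and $\operatorname{Ext}^p(W(M),U)=0$ for all $p$, in particular $p=2$. Conversely, assuming $\operatorname{Ext}^2(W(M),U)=0$ for all such $U$, the differential $d_2\colon\operatorname{Hom}(L_1W(M),U)\to\operatorname{Ext}^2(W(M),U)$ is zero, so vanishing of the total-degree-one abutment forces $\operatorname{Hom}(L_1W(M),U)=0$ for every $U$ with $U_\lambda=0$; taking $U=L_1W(M)$, which has trivial weight-$\lambda$ space, makes $\operatorname{id}_{L_1W(M)}=0$, hence $L_1W(M)=0$ for all $M$ and $W$ is exact. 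The delicate points to verify carefully are the convergence and edge-map bookkeeping of this spectral sequence, the identification of its abutment, and the confirmation that $\mathcal I_\mathbb F^\lambda$ admits enough projectives of the form $W(P)$ to make these $\operatorname{Ext}$-computations legitimate in positive characteristic.
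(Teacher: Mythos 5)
Your arguments for parts (1)--(4) are correct and are essentially the proof the paper intends: the paper's own proof is a deferral to the characteristic-zero argument of \cite[\S 3.7]{CFK}, and your unit isomorphism via $\pmb W_\mathbb F(\lambda)_\lambda\cong_{{\mathcal A}_\mathbb F^\lambda}{\mathcal A}_\mathbb F^\lambda$, the adjunction built from Proposition \ref{p:weyl} and Lemma \ref{universal}, the formal ``left adjoint with exact right adjoint'' argument for (3), and the analysis of $0\to K\to {\mathbf W}_\mathbb F^\lambda{\mathbf R}_\mathbb F^\lambda(V)\to V\to C\to 0$ for (4) are exactly the steps carried out there.

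Part (5) has a genuine gap. The hypercohomology spectral sequence $E_2^{p,q}=\operatorname{Ext}^p_{\mathcal I_\mathbb F^\lambda}(L_q{\mathbf W}_\mathbb F^\lambda(M),U)\Rightarrow\operatorname{Ext}^{p+q}_{{\mathcal A}_\mathbb F^\lambda}(M,{\mathbf R}_\mathbb F^\lambda(U))$ is not available off the shelf: every standard construction requires either an injective resolution of $U$ or a Cartan--Eilenberg resolution of ${\mathbf W}_\mathbb F^\lambda(P_\bullet)$, i.e.\ enough injectives or enough projectives in $\mathcal I_\mathbb F^\lambda$, and in positive characteristic neither is established (the paper's remark following the lemma on $\mathcal I_\mathbb F^\nu$ notes precisely that the characteristic-zero supply of projectives $P(V)^\nu$ is unavailable for general $V$). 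Your own proposed repair, ``enough projectives of the form ${\mathbf W}_\mathbb F^\lambda(P)$,'' cannot work and is not what is needed: by the adjunction, $\operatorname{Hom}_{\mathcal I_\mathbb F^\lambda}({\mathbf W}_\mathbb F^\lambda(P),U)\cong\operatorname{Hom}_{{\mathcal A}_\mathbb F^\lambda}(P,U_\lambda)=0$ whenever $U_\lambda=0$, so such modules can never surject onto the very objects $U$ the statement concerns. The fix is to read all $\operatorname{Ext}$'s as Yoneda $\operatorname{Ext}$ (whose long exact sequences exist in any abelian category) and run your low-degree computation by hand, which is what \cite[\S 3.7]{CFK} does. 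Forward direction: if ${\mathbf W}_\mathbb F^\lambda$ is exact, then ${\mathbf W}_\mathbb F^\lambda(P_\bullet)\to{\mathbf W}_\mathbb F^\lambda(M)$ is a projective resolution by (3), and Yoneda $\operatorname{Ext}$ is computable from any existing projective resolution, so $\operatorname{Ext}^p_{\mathcal I_\mathbb F^\lambda}({\mathbf W}_\mathbb F^\lambda(M),U)\cong\operatorname{Ext}^p_{{\mathcal A}_\mathbb F^\lambda}(M,{\mathbf R}_\mathbb F^\lambda(U))=0$; no spectral sequence is needed. Converse: choose $0\to M'\to P\to M\to 0$ with $P$ projective, and let $Z$ denote the image and $K$ the kernel of ${\mathbf W}_\mathbb F^\lambda(M')\to{\mathbf W}_\mathbb F^\lambda(P)$, so that $K\cong L_1{\mathbf W}_\mathbb F^\lambda(M)$ and $K_\lambda=0$ as you argued. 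The long exact sequence of $0\to Z\to{\mathbf W}_\mathbb F^\lambda(P)\to{\mathbf W}_\mathbb F^\lambda(M)\to 0$, combined with part (4) applied to ${\mathbf W}_\mathbb F^\lambda(P)$ and ${\mathbf W}_\mathbb F^\lambda(M)$ (both satisfy $V\cong{\mathbf W}_\mathbb F^\lambda{\mathbf R}_\mathbb F^\lambda(V)$ by (1)) and with the hypothesis $\operatorname{Ext}^2_{\mathcal I_\mathbb F^\lambda}({\mathbf W}_\mathbb F^\lambda(M),U)=0$, yields $\operatorname{Hom}(Z,U)=\operatorname{Ext}^1(Z,U)=0$ for all $U$ with $U_\lambda=0$; then the long exact sequence of $0\to K\to{\mathbf W}_\mathbb F^\lambda(M')\to Z\to 0$ with $U=K$ gives $\operatorname{Hom}(K,K)=0$, hence $K=0$ and ${\mathbf W}_\mathbb F^\lambda$ is exact. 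This is exactly your $E_2$-page argument, stripped of the machinery whose hypotheses you could not verify.
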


\subsection{Finite-dimensional and finitely generated modules} \label{fd}  The first part of next theorem was proved in \cite{CP} in the characteristic zero setting. The present context uses a similar approach to that in \cite{BiC,BM,JM}. 

\begin{thm} \label{fingen} 
    Let $\lambda\in P^+$ and assume $\mathcal A$ is finitely generated.
	\begin{enumerate}
	    \item ${\mathcal A}_\mathbb F^\lambda$ is a finitely generated $\mathbb F$-algebra.
		\item $\pmb W_\mathbb F(\lambda)$ is a finitely generated right ${\mathcal A}_\mathbb F^\lambda$-module. 
		\item If  $M\in\operatorname{mod}  {\mathcal A}_\mathbb F^\lambda$ is a  finitely generated (resp. finite-dimensional) then ${\mathbf W}_\mathbb F^\lambda(M)$ is a finitely generated (resp. finite-dimensional) $U_\mathbb F(\mathfrak g_{\mathcal A})$-module.
	\end{enumerate}
\end{thm}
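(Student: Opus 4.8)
The plan is to treat the three items in the order (1), (2), (3), the last being a formal consequence of the first two. For the finitely generated case of (3), observe that $\pmb W_\mathbb F(\lambda)$ is generated as a \emph{left} $U_\mathbb F(\mathfrak g_{\mathcal A})$-module by $w_\lambda$, so if $m_1,\dots,m_t$ generate $M$ over ${\mathcal A}_\mathbb F^\lambda$ then $w_\lambda\otimes m_1,\dots,w_\lambda\otimes m_t$ generate ${\mathbf W}_\mathbb F^\lambda(M)=\pmb W_\mathbb F(\lambda)\otimes_{{\mathcal A}_\mathbb F^\lambda}M$ over $U_\mathbb F(\mathfrak g_{\mathcal A})$: writing a simple tensor as $uw_\lambda\otimes am_j$ and lifting $a$ to $U_\mathbb F(\mathfrak h_{\mathcal A})$, the identity $w_\lambda\cdot a=aw_\lambda$ rewrites it as $u\,a\,(w_\lambda\otimes m_j)$. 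For the finite-dimensional case of (3) I would use (2): writing $\pmb W_\mathbb F(\lambda)=\sum_{k=1}^N w_k\,{\mathcal A}_\mathbb F^\lambda$ as a right module gives ${\mathbf W}_\mathbb F^\lambda(M)=\sum_k w_k\otimes M$, whence $\dim_\mathbb F{\mathbf W}_\mathbb F^\lambda(M)\le N\dim_\mathbb F M<\infty$.

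For item (1), the first point is that ${\mathcal A}_\mathbb F^\lambda$ is commutative, being a quotient of $U_\mathbb F(\mathfrak h_{\mathcal A})$, and is generated by the images of the $\Lambda_{i,b,r}$ (the $\binom{h_i}{k}$ act on $w_\lambda$ by the scalars $\binom{\lambda(h_i)}{k}$, so contribute nothing beyond the unit). The key relation is that these images vanish for large $r$: applying $(x_i^+\otimes b)^{(r)}$ to the highest-weight relation $(x_i^-)^{(r)}w_\lambda=0$ of Proposition \ref{p:weyl} (valid for $r>\lambda(h_i)$) and invoking Lemma \ref{basicrel} with $s=r$ collapses the right-hand side to $(-1)^r\Lambda_{i,b,r}w_\lambda$; hence $\Lambda_{i,b,r}=0$ in ${\mathcal A}_\mathbb F^\lambda$ for every $b\in\mathcal A$ and every $r>d_i:=\lambda(h_i)$. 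Thus ${\mathcal A}_\mathbb F^\lambda$ is generated by $\{\Lambda_{i,b,r}:i\in I,\ b\in\mathcal B,\ 1\le r\le d_i\}$. Finite generation then follows by comparing ${\mathcal A}_\mathbb F^\lambda$ with the algebra of $S_{d_i}$-invariants $\bigotimes_{i\in I}(\mathcal A_\mathbb F^{\otimes d_i})^{S_{d_i}}$ in the $d_i$-fold tensor powers of the finitely generated commutative $\mathbb F$-algebra $\mathbb F[\mathcal B]$: the vanishing relations above, their polarizations (replacing $b$ by linear combinations of basis elements) and Lemma \ref{a^kintermsofa} encode precisely the relations of elementary symmetric functions of $d_i$ points, exhibiting ${\mathcal A}_\mathbb F^\lambda$ as a homomorphic image of this invariant algebra. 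Since $\mathcal A$ is finitely generated the monoid $\mathcal B$ is finitely generated, so each such invariant ring is finitely generated by Noether's finiteness theorem for invariants of finite groups, which holds over a field of \emph{arbitrary} characteristic; finite generation of ${\mathcal A}_\mathbb F^\lambda$ follows.

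For item (2), I would reduce to a statement about individual weight spaces. Since $\pmb W_\mathbb F(\lambda)\in\mathcal  I_\mathbb F^\lambda$, the weight set is finite by \eqref{finiteset}, so it suffices to show each $\pmb W_\mathbb F(\lambda)_\mu$ is a finitely generated right ${\mathcal A}_\mathbb F^\lambda$-module. Using the triangular decomposition \eqref{tridecomp} together with $U_\mathbb F(\mathfrak n^+_{\mathcal A})^0w_\lambda=0$ and $U_\mathbb F(\mathfrak h_{\mathcal A})w_\lambda=w_\lambda\cdot{\mathcal A}_\mathbb F^\lambda$, the module $\pmb W_\mathbb F(\lambda)$ is generated as a right ${\mathcal A}_\mathbb F^\lambda$-module by $U_\mathbb F(\mathfrak n^-_{\mathcal A})w_\lambda$, so $\pmb W_\mathbb F(\lambda)_\mu$ is spanned over ${\mathcal A}_\mathbb F^\lambda$ by the images of the PBW monomials $\prod_l(x_{\beta_l}^-\otimes b_l)^{(k_l)}w_\lambda$ with $\sum_l k_l\beta_l=\lambda-\mu$ fixed. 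The total divided-power degree of such a monomial is then bounded, so only the labels $b_l\in\mathcal B$ are unbounded, and the heart of the matter is to confine these to a fixed finite subset of $\mathcal B$. This is done by a downward induction on the $\mathcal A$-support, commuting the $\Lambda_{\alpha,a,s}$ past the lowering operators by item \eqref{lambdax} of Lemma \ref{commutrels} (which trades a factor $(x_\alpha^-\otimes a^jb)$ for $(x_\alpha^-\otimes b)$ at the cost of a right action by $\Lambda_{\alpha,a,s}\in{\mathcal A}_\mathbb F^\lambda$) and using Lemma \ref{a^kintermsofa} and the finite generation of the monoid $\mathcal B$ to eliminate high powers and products in favour of the monoid generators. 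This is the same mechanism used for the finite-dimensionality of local Weyl modules in \cite{JM,BM,BiC}, now bookkept relative to the right ${\mathcal A}_\mathbb F^\lambda$-action.

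The main obstacle is the finite-generation step in item (1). In the $\mathfrak{sl}_2$, single-variable situation of \cite{CP} the vanishing relations and Lemma \ref{a^kintermsofa} at once present ${\mathcal A}_\mathbb F^\lambda$ as a polynomial ring in $\Lambda_{i,b,1},\dots,\Lambda_{i,b,d_i}$; but for a finitely generated $\mathcal A$ with several generators one is forced to use genuinely mixed generators $\Lambda_{i,b,r}$, and no elementary identity expresses $\Lambda_{i,bb',r}$ through the $\Lambda$'s of $b$ and $b'$. One must therefore make the comparison with the symmetric-invariant algebra precise in positive characteristic, where the identification of the $\Lambda_{i,b,r}$ with elementary-symmetric-function data involves the divided-power subtleties of the hyperalgebra (notably in small characteristic) and where Noether's degree bound is unavailable, so that only the characteristic-free finite generation of invariant rings may be invoked; the structural picture of \cite{CFK} serves only as a guide. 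By contrast, the reduction in (2), while technically involved, is routine given the straightening identities of Section \ref{preliminaries}.
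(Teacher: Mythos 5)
Your part (3), and the vanishing step of your part (1) --- $\Lambda_{i,b,r}w_\lambda=0$ for $r>\lambda(h_i)$, obtained from Proposition \ref{p:weyl} and Lemma \ref{basicrel} with $s=r$ --- coincide with what the paper does. Your part (2) also has the paper's skeleton: by \eqref{tridecomp} and \eqref{finiteset} everything reduces to confining the labels $b_l\in\mathcal B$ in the monomials $\prod_l(x^-_{\beta_l}\otimes b_l)^{(k_l)}w_\lambda$ to a fixed finite subset of $\mathcal B$, modulo the right ${\mathcal A}_\mathbb F^\lambda$-action. But what you dismiss as ``routine'' is in fact the bulk of the paper's proof: a two-parameter induction on $d(\xi)=\sum_jk_j$ and $e(\xi)=\max_jk_j$ whose engine is Lemma \ref{basicrel} applied to $0=(x_\alpha^+\otimes a_i)^{(s)}(x_\alpha^-\otimes 1)^{(s+1)}w_\lambda$, which produces the recursion \eqref{lem1} trading a high power of a generator for lower powers times elements of ${\mathcal A}_\mathbb F^\lambda$; Lemma \ref{commutrels}\eqref{lambdax} enters only later (to reach mixed monomials $\mathbf a^{\mathbf r}$), and Lemma \ref{a^kintermsofa} is not used in that proof at all. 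Your sketch cites the commutation rule but never produces this seed relation, so the announced ``downward induction on the $\mathcal A$-support'' has nothing to start from; since you do point to the mechanism of \cite{JM,BM,BiC}, which is exactly that identity, I read your part (2) as the paper's approach, under-executed rather than wrong.

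The genuine gap is your part (1). You correctly note that the vanishing bounds only $r$ and leaves infinitely many algebra generators $\Lambda_{i,b,r}$, $b\in\mathcal B$, $1\le r\le\lambda(h_i)$, but your remedy --- a surjection $\bigotimes_{i\in I}(\mathcal A_\mathbb F^{\otimes d_i})^{S_{d_i}}\twoheadrightarrow{\mathcal A}_\mathbb F^\lambda$ --- is asserted, never constructed. To build such a homomorphism you must either present the invariant algebra by generators and relations and verify those relations on the classes $\Lambda_{i,b,r}$, or produce the map module-theoretically as in \cite[Section 6]{CFK}; you do neither, and the justification you offer (``polarizations'' plus Lemma \ref{a^kintermsofa} ``encode precisely the relations of elementary symmetric functions'') leans on exactly what breaks in positive characteristic: polarization and the Weyl/Newton identities require divisions that fail mod $p$, and the characteristic-zero generation theorems for vector invariants of $S_{d}$ by polarized elementary/power-sum elements are known to be false when $\operatorname{char}\mathbb F\le d$. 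You flag this yourself as ``the main obstacle,'' which is an admission that part (1) is a program, not a proof; note also that even an injection ${\mathcal A}_\mathbb F^\lambda\hookrightarrow\bigotimes_i(\mathcal A_\mathbb F^{\otimes d_i})^{S_{d_i}}$ would not suffice, since subalgebras of finitely generated commutative algebras need not be finitely generated, so the surjectivity you assert is indispensable and unproved. For comparison, the paper takes no such route: its proof of part (1) consists solely of the vanishing statement (the $\binom{h_i}{k}$ act by the scalars $\binom{\lambda(h_i)}{k}$, and $\Lambda_{i,a,r}w_\lambda=0$ for $r>\lambda(h_i)$) and never addresses the surviving generators with $b\in\mathcal B$ arbitrary; the CFK-type structure theory of ${\mathcal A}_\mathbb F^\lambda$ that your plan requires is exactly what the paper lists among its future projects. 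So the one place where your proposal demands more than the paper's argument is precisely the place where it does not close; the gap is confined to (1), as your (2) and (3), once completed, do not depend on it.
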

	
	\proof For part (1), let $w_\lambda$ a generator for $\pmb W_\mathbb F(\lambda)$. Recall that $U_\mathbb F(\mathfrak h_{\mathcal A})$ is commutative and its elements are polynomials in $\Lambda_{i,a,r}$ and $\binom{h_i}{k}$ where $i\in I$, $r\in \mathbb Z_+$, $a\in A$, and $k\in \mathbb Z_+$. Now, since $\binom{h_i}{k} w_\lambda = \binom{\lambda(h_i)}{k} w_\lambda$ for all $i\in I$ and $k\in \mathbb Z_+$ we see that $\binom{h_i}{k} w_\lambda$ is zero  when $k>\lambda(h_i)$. Thus, to prove that ${\mathcal A}_\mathbb F^\lambda$ is a finitely generated $\mathbb F$-algebra, it suffices to prove that $\Lambda_{i,a,r} w_\lambda = 0$ for all but finitely many $r\in\mathbb Z_+$. This is immediate, since we have $\Lambda_{i,a,r} w_\lambda = 0$ for $r > \lambda(h_{i})$ by Proposition \ref{p:weyl} and Lemma \ref{basicrel}.

	For part (2), let $\{a_1,\dots,a_m\}$ be a generating set for $\mathcal A$.
	Given $\mathbf s=(s_1,\dots,s_m)\in\mathbb Z_+^{m}$, define $\mathbf a^{\mathbf s}:=a_1^{s_1}\dots a_m^{s_m}$ and $x^-_{\alpha,\mathbf s }:=x^-_{\alpha}\otimes \mathbf a^{\mathbf s}$.

    Using the decomposition \eqref{tridecomp}, we conclude that the elements of the set
    $$S=\left\{\left(x^-_{\beta_1,\mathbf{b}_1}\right)^{(n_1)}\cdots\left(x^-_{\beta_{\ell},\mathbf{b}_{\ell}}\right)^{(n_{\ell})}w_\lambda \mid  \beta_i \in R^+,\  \mathbf b_i\in\mathbb Z_+^m,\ \ell,n_i\in\mathbb N,\ i\in\{1,\ldots,\ell\}\right\}$$
    generate $\pmb W_\mathbb F(\lambda)$ as a right $\mathcal A_\mathbb F^\lambda$-module.
    
    In order to prove the statement, it suffices to prove that $\pmb W_\mathbb F(\lambda)$ is spanned by the elements
    $$\left(x_{\beta_1,{\mathbf s_1}}^-\right)^{(k_1)}\cdots\left(x_{\beta_r,{\mathbf s_r}}^-\right)^{(k_r)}w_\lambda,$$
    with $\mathbf s_1\ldots,\mathbf s_r\in \mathbb Z_+^m$ such that $\max(\mathbf s_j)<\lambda(h_{\beta_j})$ for all $j\in\{1,\ldots,r\}$, $\beta_1,\dots,\beta_r\in R^+$  and $\sum_{j}k_j\beta_j\le \lambda -w_0\lambda$. This last condition comes from the definition of the module $\pmb W_\mathbb F(\lambda)$, (cf. Sections 3.0 - 3.2). 
    
    Let $\mathcal R_\lambda = R^+\times \mathbb Z_+^m\times\mathbb Z_+$ and consider $\Xi$ as the set of functions $\xi:\mathbb N\to \mathcal R_\lambda$ such that $j\mapsto \xi_j=(\beta_j,\mathbf s_j,k_j)$ such that $k_j=0$ for all $j$ sufficiently large. Let $\Xi'$ be the subset of $\Xi$ consisting of the elements $\xi$ such that $\max(\mathbf s_j)<\lambda(h_{\beta_j})$ for all $j$.  
	
    Given $\xi\in\Xi$ we associate an element $v_\xi\in\pmb W_\mathbb F(\lambda)$ as follows $$v_\xi:=\left(x_{\beta_1,\mathbf s_1}^-\right)^{(k_1)}\cdots\left(x_{\beta_\ell, \mathbf s_\ell}^-\right)^{(k_\ell)}w_\lambda.$$ 
    Let $\mathcal S$ be the $\mathbb Z$-span of all vectors associated to elements belonging to $\Xi'$ and by $\mathcal S\cdot \mathcal A_\mathbb F^\lambda$ the set obtained by the right action of $\mathcal A_\mathbb F^\lambda$ on the elements of $\mathcal S$.  Define the degree of $\xi$ to be $d(\xi):= \sum_j k_j$ and the maximal exponent of $\xi$ to be $e(\xi):= \max\{k_j\}$. Notice that $e(\xi)\le d(\xi)$ and $d(\xi)\ne 0$ implies $e(\xi)\ne 0$. Since there is nothing to be proved when $d(\xi)=0$, we assume from now on that $d(\xi)>0$. 
    
    Let $\Xi_{d,e}$ be the subset of $\Xi$ consisting of those $\xi$ satisfying $d(\xi) = d$ and $e(\xi) = e$, and set $\Xi_d = \cup_{1\le e\le d} \Xi_{d,e}$.
    
    We prove by induction on $d$ and sub-induction on $e$ that if $\xi \in \Xi_{d,e}$ is such that there exists $j\in\mathbb{N}$ with $\max(\mathbf s_j) \ge \lambda(h_{\beta_{j}})$, then $v_\xi\in \mathcal S\cdot \mathcal A_\mathbb F^\lambda$.
    
    More precisely, given $0 < e \le d \in \mathbb N$, we assume, by the induction hypothesis, that this statement is true for every $\xi$ which belongs either to $\Xi_{d,e'}$ with $e'< e$ or to $\Xi_{d'}$ with $d'<d$.
    
    We split the proof according to two cases: $e=d$ and $e<d$. 
    
    \
    
\textbf{\textit{Step 1.}}    When $e=d$, it follows that $v_\xi=\left(x_{\alpha}^-\otimes a\right)^{(e)}w_\lambda$ for some $\alpha\in R^+$ and $a \in\mathcal A$. 
	    
    \begin{itemize}
	    \item[\textit{Step 1.1.}] If $e=1$, then $v_\xi=\left(x_{\alpha}^-\otimes a\right)w_\lambda$ for some $a \in\mathcal A$ and $\alpha\in R^+$. 
	    In order to prove this case, we first deal with the generators of $\mathcal A$ and then we deal with an arbitrary element in $\mathcal A$:

        By  Lemma \ref{basicrel}, for each $a_i\in \{ a_1,\dots,a_m\}$ and  $s\in\mathbb N$, with $s\geq\lambda(h_\alpha)$, we obtain:
        \begin{eqnarray}\label{S_1}
        0&=&(-1)^{s}\left(x_\alpha^+\otimes a_i\right)^{(s)} (x_\alpha^-\otimes1)^{(s+1)}w_\lambda\nonumber\\
        &=&\sum_{j=0}^{s}\left(x_\alpha^-\otimes a_i^j\right)\Lambda_{_\alpha,a_i,s-j}w_\lambda\nonumber\\
        &=&\sum_{j=0}^{s}\left(\left(x_\alpha^-\otimes a_i^j\right)w_\lambda\right)\Lambda_{_\alpha,a_i,s-j}.
        \end{eqnarray}
        So
        \begin{equation}\label{lem1}
        \left(x_\alpha^-\otimes a_i^s\right)w_\lambda=\sum_{j=0}^{s-1}\left(\left(x_\alpha^-\otimes a_i^j\right)w_\lambda\right) \left(-\Lambda_{_\alpha,a_i,s-j}\right).
        \end{equation}
        Thus, in particular, 
        $$\left(x_\alpha^-\otimes a_i^{\lambda(h_\alpha)}\right)w_\lambda\in\operatorname{span}\left\{\left(x_\alpha^-\otimes a_i^j\right)w_\lambda\mathcal A_\mathbb F^\lambda\ \big|\ 0\leq j<\lambda(h_\alpha)\right\} $$
        and an induction on $s$ gives 
        \begin{equation}\label{inS_1}
            \left(x_\alpha^-\otimes a_i^{s}\right)w_\lambda\in\operatorname{span}\left\{\left(x_\alpha^-\otimes a_i^j\right)w_\lambda\mathcal\mathcal A_\mathbb F^\lambda\ \big|\ 0\leq j<\lambda(h_\alpha)\right\},
        \end{equation}
        for each $i\in\{1,\dots,m\}$.
    
        Now, let $a_k\in\{ a_1,\dots,a_m\}$ such that $a_k\ne a_i$. 
       
        By applying $\left(h_\alpha\otimes a_k^{r}\right)$ to equation \eqref{S_1} we also see that $$\left(x_\alpha^-\otimes a_k^{r}a_i^{s}\right)w_\lambda\in \operatorname{span}\left\{\left(x_\alpha^-\otimes  a_i^j\right)w_\lambda\mathcal A_\mathbb F^\lambda,\left(x_\alpha^-\otimes  a_k^{r}a_i^j\right)w_\lambda\mathcal A_\mathbb F^\lambda\ \big|\ 0\leq j<\lambda(h_\alpha)\right\}.$$
        
        If $r,s<\lambda(h_\alpha)$, then $\left(x_\alpha^-\otimes a_i^{s}a_k^{r}\right)w_\lambda\in\mathcal S\cdot\mathcal A_\mathbb F^\lambda$. Otherwise, if $s<\lambda(h_\alpha)$ and $r\geq\lambda(h_\alpha)$, by applying $h_\alpha\otimes a_i^{s}$ to equation \eqref{S_1} (with $i=k$) we obtain:
        \begin{eqnarray*}
            0&=&(h\otimes a_i^{s})\sum_{j=0}^{r}\left(x_\alpha^-\otimes a_{k}^j\right)w_\lambda\Lambda_{_\alpha,a_{k},r-j}\\
            &=&\sum_{j=0}^{r}\left(x_\alpha^-\otimes a_{k}^j\right)w_\lambda(h\otimes a_i^{s})\Lambda_{_\alpha,a_{k},r-j}-2\sum_{j=0}^{r}\left(x_\alpha^-\otimes a_i^{s}a_k^j\right)w_\lambda\Lambda_{_\alpha,a_{k},r-j}.
        \end{eqnarray*}
        So, 
        \begin{eqnarray*}
            \left(x_\alpha^-\otimes a_i^{s}a_k^{r}\right)w_\lambda&=&\frac{1}{2}\sum_{j=0}^{r}\left(x_\alpha^-\otimes a_{k}^j\right)w_\lambda(h\otimes a_i^{s})\Lambda_{_\alpha,a_{k},r-j}\\
            &&-\sum_{j=0}^{r-1}\left(x_\alpha^-\otimes a_i^{s}a_k^j\right)w_\lambda\Lambda_{_\alpha,a_{k},r-j}.
        \end{eqnarray*}
        The first sum is in $\mathcal S \cdot \mathcal A_\mathbb F^\lambda$ by \eqref{inS_1} and the second is in $\mathcal S \cdot \mathcal A_\mathbb F^\lambda$ by a further induction on $r$. Thus, $\left(x_\alpha^-\otimes a_i^{s}a_k^{r}\right)w_\lambda\in\mathcal S \cdot \mathcal A_\mathbb F^\lambda$ for all $r,s\in\mathbb N$ and $i,k\in\{1,\dots,m\}$. 
        Hence, by repeating this argument for all generators of $\mathcal A$, we conclude that  $\left(x_\alpha^-\otimes a\right)w_\lambda\in \mathcal S \cdot \mathcal A_\mathbb F^\lambda$, for all $a\in\mathcal A$ and $\alpha\in R^+$.
        
         \item[\textit{Step 1.2}]
         
         If $e>1$, then $v_\xi=\left(x^-_{\alpha}\otimes \mathbf a^\mathbf r\right)^{(e)}w_\lambda$ for some $\alpha\in R^+$ and $\mathbf r=(r_1,\dots,r_m)\in\mathbb Z^m_+$. In this case, we first deal with the case $\mathbf r$ is a multiple of a canonical element $\mathbf e_i\in \mathbb Z_{+}^m$. So, suppose $\mathbf r=r_i\mathbf e_i$ for some $i\in\{1,\dots,m\}$, that means $$\left(x_\alpha^-\otimes \mathbf a^{\mathbf r}  \right)^{(e)}=\left(x_\alpha^-\otimes a_i^{r_i}  \right)^{(e)}.$$ 
         If $r_i< \lambda(h_{\alpha})$, there is nothing to prove. Otherwise, we first note that the defining relations of $\mathbf W_\mathbb F(\lambda)$ and Lemma \ref{basicrel} imply that

\begin{eqnarray*}
        0&=&(-1)^{er_i}\left(x_\alpha^+\otimes a_i\right)^{(er_i)}(x_\alpha^- \otimes1)^{(er_i+e)}w_\lambda\\
        &=&\sum_{j=0}^{er_i}\left(\left(X^-_{\alpha,a_i,1}(u)\right)^{(e)}\right)_{j+e}\Lambda_{\alpha,a_i,er_i-j}w_\lambda\\
        &=&\sum_{j=0}^{er_i}\left(\left(\sum_{z=0}^\infty\left(x_\alpha^-\otimes a_i^{z}\right)u^{z+1}\right)^{(e)}\right)_{j+e}\Lambda_{\alpha,a_i,er_i-j}w_\lambda\\
        &=&\sum_{j=0}^{er_i}\left(\left(\left(\sum_{z=0}^\infty\left(x_\alpha^-\otimes a_i^{z}\right)u^{z+1}\right)^{(e)}\right)_{j+e}w_\lambda\right)\Lambda_{\alpha,a_i,er_i-j}\\
         &=&\sum_{j=0}^{er_i}\left(\sum_{\substack{t\in\mathbb Z_+\\0\le t\leq r_i\\te=j}}\left(x^-_\alpha\otimes a_i^t\right)^{(e)}w_\lambda\right)\Lambda_{\alpha,a_i,er_i-j}+\mathcal Xw_\lambda\\
         &=&\left(x_\alpha^-\otimes a_i^{r_i}\right)^{(e)}w_\lambda+\sum_{j=0}^{er_i-1}\left(\sum_{\substack{t\in\mathbb Z_+\\0\le t<r_i\\te=j}}\left(x_\alpha^-\otimes a_i^t\right)^{(e)}w_\lambda\right)\Lambda_{\alpha,a_i,er_i-j}+\mathcal Xw_\lambda 
        \end{eqnarray*}
    where $\mathcal Xw_\lambda$ belongs to the $\mathcal A_\mathbb F^\lambda$-span of vectors $v_{\phi}$ with $\phi\in \Xi_{e,e'}$, for $e'<e$. Thus, 
        \begin{eqnarray*}\label{eq2}
        \left(x_\alpha^-\otimes a_i^{r_i}\right)^{(e)}w_\lambda&=&-\sum_{j=0}^{er_i-1}\left(\sum_{\substack{t\in\mathbb Z_+\\0\le t<r_i\\te=j}}\left(x_\alpha^-\otimes a_i^t\right)^{(e)}w_\lambda\right)\Lambda_{\alpha,a_i,er_i-j}+\mathcal Xw_\lambda,
        \end{eqnarray*}
    the term $\mathcal Xw_\lambda$ fits the induction hypothesis, and by a further induction on $r_i\ge \lambda(h_\alpha)$ we conclude that $\left(x_\alpha^-\otimes a_i^{r_i}\right)^{(e)}w_\lambda \in \mathcal S \cdot \mathcal A_\mathbb F^\lambda$.

        \item[\textit{Step 1.3}] Still in the case $e>1$, we know consider the arbitrary case of 
        $v_\xi=\left(x^-_{\alpha}\otimes \mathbf a^\mathbf r\right)^{(e)}w_\lambda$ for some $\alpha\in R^+$ and $\mathbf r=(r_1,\dots,r_m)\in\mathbb Z^m_+$, with $\mathbf r$ not a multiple of a canonical element in $\mathbb Z_{+}^{m}$. In this case, if $r_i< \lambda(h_{\alpha})$ for $i=1,\dots,m$, there is nothing to prove. Otherwise, we have $r_i\geq \lambda(h_{\alpha})$ for some $i\in\{1,\dots,m\}$. Set $\mathbf r'= \mathbf r-r_i\mathbf e_i$.

        From Step 1.2, we have 
        $$0=\left(x_\alpha^-\otimes a_i^{r_i}\right)^{(e)}w_\lambda+\sum_{j=0}^{er_i-1}\left(\sum_{\substack{t\in\mathbb Z_+\\0\le t<r_i\\te=j}}\left(x_\alpha^-\otimes a_i^{t}\right)^{(e)}w_\lambda\right)\Lambda_{\alpha,a_i,er_i-j}+\mathcal Xw_\lambda$$ 
        where $\mathcal Xw_\lambda$ belongs to the $\mathcal A_\mathbb F^\lambda$-span of vectors $v_{\phi}$ with $\phi\in \Xi_{e,e'}$, for $e'< e$. Thus, by applying $\Lambda_{\alpha,\mathbf a^{\mathbf r'},e}$ to this equation, from Lemma \ref{commutrels}\eqref{lambdax}, with $k=r=e, a=\mathbf a^{\mathbf r'}$ and $b=a_i^{r_i}$, we get
        \begin{eqnarray*}
        0&=&\Lambda_{\alpha,\mathbf a^{\mathbf r'},e}\left(x_\alpha^-\otimes a_i^{r_i}\right)^{(e)}w_\lambda +\Lambda_{\alpha,\mathbf a^{\mathbf r'},e}\sum_{j=0}^{er_i-1}\left(\sum_{\substack{t\in\mathbb Z_+\\te=j}}\left(x_\alpha^-\otimes a_i^{t}\right)^{(e)}w_\lambda\right)\Lambda_{\alpha,a_i,er_i-j}\\
        &&+\Lambda_{\alpha,\mathbf a^{\mathbf r'},e}\mathcal Xw_\lambda\\ 
        &=&2\left(x_\alpha^-\otimes \mathbf a^{\mathbf r}  \right)^{(e)}w_\lambda
        +\sum_{s=1}^{e-1}\left(\sum_{j\ge 0}(j+1)\left(x_\alpha^-\otimes {\mathbf a^{\mathbf r'}}^j a_i^{r_i}\right)u^{j}\right)_{e-s}^{(e)}w_\lambda\Lambda_{\alpha,\mathbf a^{\mathbf r'},s}
        \\
        &&+\left(x_\alpha^-\otimes a_i^{r_i}\right)^{(e)}w_\lambda\Lambda_{\alpha,\mathbf a^{\mathbf r'},e}+2\sum_{j=0}^{er_i-1}\sum_{\substack{t\in\mathbb Z_+\\te=j}} \left(x_\alpha^-\otimes \mathbf a^{\mathbf r'} a_i^{t}\right)^{(e)}w_\lambda\Lambda_{\alpha,a_i,er_i-j}\\
        &&+\sum_{j=0}^{er_i-1}\sum_{\substack{t\in\mathbb Z_+\\te=j}} \sum_{s=1}^{e-1}\left(\sum_{k\ge 0}(k+1)\left(x_\alpha^-\otimes {\mathbf a^{\mathbf r'}}^ka_i^{t}\right)u^{k}\right)_{e-s}^{(e)}w_\lambda\Lambda_{\alpha,a_i,er_i-j}\Lambda_{\alpha,\mathbf a^{\mathbf r'},s}\\
        &&+\sum_{j=0}^{er_i-1}\sum_{\substack{t\in\mathbb Z_+\\te=j}} \left(x_\alpha^-\otimes a_i^{t}\right)^{(e)}w_\lambda\Lambda_{\alpha,a_i,er_i-j}\Lambda_{\alpha,\mathbf a^{\mathbf r'},e}
        +\Lambda_{\alpha,\mathbf a^{\mathbf r'},e}\mathcal Xw_\lambda\\
        &=&2\left(x_\alpha^-\otimes\mathbf a^{\mathbf r}\right)^{(e)}w_\lambda
        +\mathcal X_1w_\lambda+2\sum_{j=0}^{er_i-1}\sum_{\substack{t\in\mathbb Z_+\\te=j}} \left(x_\alpha^-\otimes \mathbf a^{\mathbf r'} a_i^{t}\right)^{(e)}w_\lambda\Lambda_{\alpha,a_i,er_i-j}\\
        &&+\mathcal X_2w_\lambda+\sum_{j=0}^{er_i}\sum_{\substack{t\in\mathbb Z_+\\te=j}} \left(x_\alpha^-\otimes a_i^{t}\right)^{(e)}w_\lambda\Lambda_{\alpha,a_i,er_i-j}\Lambda_{\alpha,\mathbf a^{\mathbf r'},e}+\mathcal X_3w_\lambda
        \end{eqnarray*}
        where $\mathcal X_1w_\lambda$, $\mathcal X_2w_\lambda$, and $\mathcal X_3w_\lambda$, belong to the $\mathcal A_\mathbb F^\lambda$-span of vectors $v_{\phi}$ with $\phi\in \Xi_{e,e'}$, for $e'< e$. Thus, these terms fit the induction hypotheses and we can write
        \begin{eqnarray}\label{concl1case}
            \left(x_\alpha^-\otimes \mathbf a^{\mathbf r}\right)^{(e)}w_\lambda
            &=&-\sum_{j=0}^{er_i-1}\sum_{\substack{t\in\mathbb Z_+\\0\leq t<r_i\\te=j}} \left(x_\alpha^-\otimes \mathbf a^{\mathbf r'} a_i^{t}\right)^{(e)}w_\lambda\Lambda_{\alpha,a_i,er_i-j}'\nonumber\\
            &&-\frac{1}{2}\sum_{j=0}^{er_i}\sum_{\substack{t\in\mathbb Z_+\\te=j}}\left(x_\alpha^-\otimes a_i^{t}\right)^{(e)}w_\lambda\Lambda_{\alpha,a_i,er_i-j}\Lambda_{\alpha,\mathbf a^{\mathbf r'},e}+\mathcal X_4w_\lambda,
        \end{eqnarray}
 where $\mathcal X_4w_\lambda$ belongs to the $\mathcal A_\mathbb F^\lambda$-span of vectors $v_{\phi}$ with $\phi\in \Xi_{e,e'}$, for $e'< e$. The last summation on the right-hand side is in $\mathcal S\cdot \mathcal A_\mathbb F^\lambda$ by Step 1.2. Now we again argue by a further induction on $r_i\ge \lambda(h_\alpha)$ that $\left(x_\alpha^-\otimes\mathbf a^{\mathbf r}\right)^{(e)}w_\lambda$ is in the span of vectors from $\Xi_e$ where the power of the generator $a_i$ is smaller than $\lambda(h_\alpha)$. We now repeat the same argument for each one of the generators of $\mathcal A$ in the elements on the right hand side of \eqref{concl1case} and get $\left(x_\alpha^-\otimes\mathbf a^{\mathbf r}  \right)^{(e)}w_\lambda\in \mathcal S \cdot \mathcal A_\mathbb F^\lambda$.
 
 \item[\textit{Step 2}] The remaining case $e=e(\xi)<d(\xi)=d$ is handled with a simple repeated application of Lemma \ref{commutrels}\eqref{commutrels3} and the induction hypothesis completes the proof of part (2).
\end{itemize} 

\

 Finally, part (3) is immediate from part (2), Definition \ref{loc}, and \eqref{finiteset}. 
 \endproof

\subsection{Simple modules in \texorpdfstring{$\mathcal  I_\mathbb F^\lambda$}{} and local Weyl modules} Let $\rm{irr}(\operatorname{mod} {\mathcal A}_\mathbb F^\lambda$) be the set of irreducible representations of ${\mathcal A}_\mathbb F^\lambda$. Since ${\mathcal A}_\mathbb F^\lambda$ is a commutative finitely generated algebra it follows that if $M\in\rm{irr}(\operatorname{mod}  {\mathcal A}_\mathbb F^\lambda)$ then $\dim M=1$. By Theorem \ref{fingen} we see that 
$$\dim{\mathbf W}_\mathbb F^\lambda (M)<\infty\text{ and }{\mathbf R}_\mathbb F^{\lambda}{\mathbf W}_\mathbb F^\lambda (M) =M,\text{ for all }M\in\rm{irr}(\operatorname{mod}  {\mathcal A}_\mathbb F^\lambda).$$ 

\begin{defn} The $U_\mathbb F(\mathfrak g_\mathcal A)$-module $\mathbf W_\mathbb F^\lambda (M)$, where $M$ is an irreducible object of $\operatorname{mod} A_\mathbb F^\lambda$, is called \textit{local Weyl module}.
\end{defn}

Consider $\mathbf V_\mathbb F^\lambda(M)$ as the unique irreducible quotient of ${\mathbf W}_\mathbb F^\lambda(M)$ (see Lemma \ref{l:elem1}). The next result shows that any irreducible module in $\mathcal  I_\mathbb F^\lambda$ is isomorphic to $\mathbf V_\mathbb F^\mu(M)$ for some $\mu\in P^+$. 
	
\begin{prop}
\label{uniqueirr} 	Let $\lambda\in P^+$ and assume that $V\in\mathcal  I_\mathbb F^\lambda$ is irreducible.

\begin{enumerate}

\item There exists $\mu\in P^+\cap \mathcal (\lambda- Q^+)$ such that $\mathrm{wt}  V\subset\mu- Q^+ \text{ and } \dim V_\mu=1$. In particular,  $V$ is the unique irreducible  quotient of ${\mathbf W}_\mathbb F^\mu{\mathbf R}_\mathbb F^\mu(V)$ and $\dim V<\infty$. 
\item If $V'\in\mathcal  I_\mathbb F$ we have  $V\cong V'$ as $U_\mathbb F(\mathfrak g_{\mathcal A})$-modules if, and only if, ${\mathbf R}_\mathbb F^\mu(V)\cong {\mathbf R}_\mathbb F^{\mu}(V')$ as ${\mathcal A}_\mathbb F^\mu$-modules.
\end{enumerate}
\end{prop}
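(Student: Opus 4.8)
The plan is to first locate the distinguished weight $\mu$ and then to identify $V$ with the irreducible quotient of a local Weyl module. Since $V\in\mathcal I_\mathbb F^\lambda$, by \eqref{finiteset} the set $\mathrm{wt}(V)$ is finite and contained in $\lambda-Q^+$, so I would choose $\mu$ maximal among the weights of $V$. Maximality forces $U_\mathbb F(\mathfrak n_{\mathcal A}^+)^0V_\mu=0$, and irreducibility of $V$ gives $V=U_\mathbb F(\mathfrak g_{\mathcal A})V_\mu$; Lemma~\ref{l:elem1}(2) then yields $\mathrm{wt}(V)\subseteq\mu-Q^+$, and Lemma~\ref{l:elem1}(1) gives $\mu\in P^+$. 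As $\mu\in\mathrm{wt}(V)\subseteq\lambda-Q^+$, this produces the desired $\mu\in P^+\cap(\lambda-Q^+)$ with $\mathrm{wt}(V)\subset\mu-Q^+$.

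The heart of part~(1) is the equality $\dim V_\mu=1$, which I expect to be the main obstacle. The key is to show that $V_\mu$ is an \emph{irreducible} $\mathcal A_\mathbb F^\mu$-module. If $N\subseteq V_\mu$ is a nonzero $\mathcal A_\mathbb F^\mu$-submodule, then $U_\mathbb F(\mathfrak n_{\mathcal A}^+)^0N=0$ (since $N\subseteq V_\mu$), so using the triangular decomposition \eqref{tridecomp} the weight-$\mu$ component of $U_\mathbb F(\mathfrak g_{\mathcal A})N$ is exactly $U_\mathbb F(\mathfrak h_{\mathcal A})N=N$; but $U_\mathbb F(\mathfrak g_{\mathcal A})N=V$ by irreducibility, whence $N=V_\mu$. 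Thus $V_\mu$ is irreducible over the commutative $\mathbb F$-algebra $\mathcal A_\mathbb F^\mu$, which is finitely generated by Theorem~\ref{fingen}(1); since $\mathbb F$ is algebraically closed, every such module is one-dimensional, so $\dim V_\mu=1$. This is the step where positive characteristic and the structure of $\mathcal A_\mathbb F^\mu$ must be handled with care, rather than relying on complete reducibility as in characteristic zero.

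Setting $M:={\mathbf R}_\mathbb F^\mu(V)=V_\mu\in\mathrm{irr}(\operatorname{mod}\mathcal A_\mathbb F^\mu)$, I would then invoke the canonical map $\eta_V\colon{\mathbf W}_\mathbb F^\mu{\mathbf R}_\mathbb F^\mu(V)\to V$ of Theorem~\ref{defeta}(2). By Theorem~\ref{defeta}(1) and the adjunction triangle identities, $\eta_V$ restricts to an isomorphism on the $\mu$-weight space, so its image is nonzero; irreducibility of $V$ makes $\eta_V$ surjective. Hence $V$ is a quotient of the local Weyl module ${\mathbf W}_\mathbb F^\mu(M)$, which is finite-dimensional by Theorem~\ref{fingen} (as recorded in the discussion preceding the definition of the local Weyl module), giving $\dim V<\infty$. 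Finally, ${\mathbf W}_\mathbb F^\mu(M)$ is generated by its one-dimensional $\mu$-weight space, so Lemma~\ref{l:elem1}(1) shows it has a unique irreducible quotient; since $V$ is an irreducible quotient, $V\cong\mathbf V_\mathbb F^\mu(M)$ is that quotient.

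For part~(2), the forward implication is immediate from functoriality of ${\mathbf R}_\mathbb F^\mu$, since isomorphic modules are sent to isomorphic $\mathcal A_\mathbb F^\mu$-modules. For the converse, suppose ${\mathbf R}_\mathbb F^\mu(V)\cong{\mathbf R}_\mathbb F^\mu(V')=:M$. This module is nonzero and one-dimensional, so $\mu\in\mathrm{wt}(V')$; applying part~(1) to the irreducible $V'$ and using that ${\mathbf R}_\mathbb F^\mu$ requires $V'\in\mathcal I_\mathbb F^\mu$, the highest weight of $V'$ must coincide with $\mu$. Applying the functor ${\mathbf W}_\mathbb F^\mu$ gives ${\mathbf W}_\mathbb F^\mu{\mathbf R}_\mathbb F^\mu(V)\cong{\mathbf W}_\mathbb F^\mu(M)\cong{\mathbf W}_\mathbb F^\mu{\mathbf R}_\mathbb F^\mu(V')$, and by part~(1) each of $V$ and $V'$ is the unique irreducible quotient of these isomorphic modules. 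Since isomorphic modules have isomorphic unique irreducible quotients, I conclude $V\cong V'$ as $U_\mathbb F(\mathfrak g_{\mathcal A})$-modules.
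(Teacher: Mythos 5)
Your proof is correct and follows essentially the same route as the paper's: choose a maximal weight $\mu$, show $V_\mu$ is an irreducible $\mathcal A_\mathbb F^\mu$-module via the observation that a proper $\mathcal A_\mathbb F^\mu$-submodule of $V_\mu$ would generate a proper nonzero $U_\mathbb F(\mathfrak g_{\mathcal A})$-submodule, conclude $\dim V_\mu=1$ from the finitely generated commutative algebra structure (Theorem \ref{fingen}), realize $V$ as the unique irreducible quotient of $\mathbf W_\mathbb F^\mu\mathbf R_\mathbb F^\mu(V)$ via the adjunction of Theorem \ref{defeta}, and deduce part (2) from part (1). The only difference is that you spell out details the paper leaves implicit (the weight-space computation via the triangular decomposition, and the uniqueness argument in part (2)), which is fine.
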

	
\proof 
    Since $V\in \mathcal  I_\mathbb F^\lambda$, it follows that there exists  $\mu\in\lambda-Q^+$ with  $ V_\mu\ne 0$ and $U_\mathbb F(\mathfrak n_{\mathcal A}^+)V_\mu=0$.
	It is immediate from Theorem \ref{defeta} that $V$ is a quotient of ${\mathbf W}_\mathbb F^\mu {\mathbf R}_\mathbb F^\mu(V)$. If  $S_\mu$ is a nonzero proper $U_\mathbb F(\mathfrak h_{\mathcal A})$-submodule of $V_\mu$, then $U_\mathbb F(\mathfrak g_{\mathcal A})S_\mu$ is a proper submodule of $V$, a contradiction. Hence, ${\mathbf R}_\mathbb F^\mu(V)$ is an irreducible  ${\mathcal A}_\mathbb F^\mu$-module which implies that $\dim V_\mu=1$. Theorem \ref{fingen} now implies that $\dim \mathbf W_\mathbb F^\mu \mathbf R_\mathbb F^\mu(V)<\infty$ and $\dim V<\infty$. The proof that $V$ is the unique irreducible quotient of $\mathbf W_\mathbb F^\mu \mathbf R_\mathbb F^\mu(V)$ is standard once we have $\mathbf R_\mathbb F^{\mu} \mathbf W_\mathbb F^{\mu} \mathbf R_\mathbb F^{\mu}(V)\cong V_{\mu}$.
	The final statement of the lemma is now deduced from the first part. 
\endproof

\subsection{On endomorphisms of Global Weyl modules} \label{end}

The global Weyl modules are the natural objects to play a role similar to that of the Verma modules  in the study of the representations of $U_\mathbb F(\mathfrak g\otimes \mathcal A)$. One of the fundamental properties of Verma modules is that the space of morphisms between two Verma modules is either zero or one–-dimensional. Searching for the analogue of this property for global Weyl modules, the situation is more complicated and partial results are obtained under certain restrictions on $\mathfrak g$, $\lambda$ and $\mathcal A$, even in characteristic zero, cf. \cite{BCGM}. We have this first similar (small) advance, whose proof follows that of  \cite[Lemma 1.11]{BCGM}.

\begin{prop} 
    For $\lambda\in P^+$, $\pmb a \in {\mathcal A}_\mathbb F^\lambda$ the assignment $w_\lambda\to w_\lambda\pmb a$ extends to a homomorphism $\pmb W_\mathbb F(\lambda)\to \pmb W_\mathbb F(\lambda)$ of $U_\mathbb F(\mathfrak g_{\mathcal A})$-modules and we have 
	$$\operatorname{Hom}_{U_\mathbb F(\mathfrak g_{\mathcal A})}(\pmb W_\mathbb F(\lambda),\pmb W_\mathbb F(\lambda))\cong_{{\mathcal A}_\mathbb F^\lambda}  {\mathcal A}_\mathbb F^\lambda. $$
	\end{prop}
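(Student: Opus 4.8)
The plan is to establish the isomorphism by exhibiting mutually inverse maps between the endomorphism algebra and $\mathcal A_\mathbb F^\lambda$, exploiting the bimodule structure on $\pmb W_\mathbb F(\lambda)$ developed in the construction of the Weyl functor. First I would verify that the assignment $w_\lambda \mapsto w_\lambda \pmb a$ genuinely extends to an endomorphism. Since $\pmb W_\mathbb F(\lambda)$ is generated by $w_\lambda$ subject to the relations in Proposition \ref{p:weyl}, it suffices to check that $w_\lambda \pmb a$ satisfies the same defining relations. Writing $\pmb a = x + \operatorname{Ann}_{U_\mathbb F(\mathfrak h_{\mathcal A})}(w_\lambda)$ for some $x \in U_\mathbb F(\mathfrak h_{\mathcal A})$, the element $w_\lambda \pmb a = x w_\lambda$ lies in the weight space $\pmb W_\mathbb F(\lambda)_\lambda$ because $x$ is a weight-zero element for the $\mathfrak h$-action; the same computation already carried out to show the right action is well defined (namely that $U_\mathbb F(\mathfrak n_{\mathcal A}^+)^0 x w_\lambda = 0$, that $(h-\lambda(h)) x w_\lambda = 0$, and that $(x_\alpha^-)^{(s)} x w_\lambda = 0$ for $s > \lambda(h_\alpha)$) shows precisely that $x w_\lambda$ obeys the relations \eqref{weyldef}. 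Hence by the universal property there is a unique $U_\mathbb F(\mathfrak g_{\mathcal A})$-module homomorphism $\varphi_{\pmb a}\colon \pmb W_\mathbb F(\lambda) \to \pmb W_\mathbb F(\lambda)$ sending $w_\lambda \mapsto w_\lambda \pmb a$.

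Next I would define the two maps realizing the isomorphism. In one direction, send $\pmb a \in \mathcal A_\mathbb F^\lambda$ to $\varphi_{\pmb a}$; this is a homomorphism of $\mathcal A_\mathbb F^\lambda$-modules into $\operatorname{End}_{U_\mathbb F(\mathfrak g_{\mathcal A})}(\pmb W_\mathbb F(\lambda))$, where the target is regarded as a right $\mathcal A_\mathbb F^\lambda$-module via the right multiplication action on $\pmb W_\mathbb F(\lambda)_\lambda$. In the other direction, given any endomorphism $\psi$, I would use that $\psi(w_\lambda) \in \pmb W_\mathbb F(\lambda)_\lambda$ (since $\psi$ preserves weights and $w_\lambda$ has weight $\lambda$), together with the bimodule isomorphism $\pmb W_\mathbb F(\lambda)_\lambda \cong_{\mathcal A_\mathbb F^\lambda} \mathcal A_\mathbb F^\lambda$ established just before Definition \ref{loc}, to record $\psi(w_\lambda)$ as an element $\pmb a_\psi \in \mathcal A_\mathbb F^\lambda$. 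The heart of the argument is then to check these two assignments are mutually inverse: the composite $\pmb a \mapsto \varphi_{\pmb a} \mapsto \pmb a_{\varphi_{\pmb a}}$ is the identity because $\varphi_{\pmb a}(w_\lambda) = w_\lambda \pmb a$ corresponds under the bimodule isomorphism to $\pmb a$ itself, while the composite $\psi \mapsto \pmb a_\psi \mapsto \varphi_{\pmb a_\psi}$ is the identity because both $\psi$ and $\varphi_{\pmb a_\psi}$ agree on the generator $w_\lambda$ and an endomorphism of a cyclic module is determined by its value on the generator.

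The main obstacle I anticipate is the verification in the first step that $x w_\lambda$ satisfies all defining relations — in particular the relations $(x_\alpha^-)^{(s)} x w_\lambda = 0$ for $s > \lambda(h_\alpha)$ are not immediate from the $\mathfrak h$-weight considerations alone and require the argument already used in establishing the well-definedness of the right $U_\mathbb F(\mathfrak h_{\mathcal A})$-action, namely that it follows from $(x_\alpha^+)^{(s)} x w_\lambda = 0$ together with $\pmb W_\mathbb F(\lambda) \in \mathcal I_\mathbb F$ and Lemma \ref{l:elem1}. Once this is in hand, the remaining verifications are formal: compatibility with the $\mathcal A_\mathbb F^\lambda$-action on both sides follows since $\varphi_{\pmb a} \varphi_{\pmb b} = \varphi_{\pmb b \pmb a}$ (the order reflecting that composition of endomorphisms corresponds to right multiplication), and injectivity and surjectivity both reduce to the cyclicity of $\pmb W_\mathbb F(\lambda)$ and the faithfulness of the right action on $\pmb W_\mathbb F(\lambda)_\lambda$ recorded in the displayed description of $\operatorname{Ann}_{U_\mathbb F(\mathfrak h_{\mathcal A})}(w_\lambda)$. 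I would therefore structure the write-up so that the bimodule identification $\pmb W_\mathbb F(\lambda)_\lambda \cong \mathcal A_\mathbb F^\lambda$ does most of the bookkeeping, reducing the proof to transporting that identification through the universal property of the cyclic module.
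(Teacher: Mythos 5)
Your proposal is correct and follows essentially the same route as the paper: the first statement is verified via the bimodule structure (the same relation checks used to establish the right $U_\mathbb F(\mathfrak h_{\mathcal A})$-action), and the isomorphism is obtained by matching an endomorphism $\psi$ with the element of ${\mathcal A}_\mathbb F^\lambda$ representing $\psi(w_\lambda)\in\pmb W_\mathbb F(\lambda)_\lambda = U_\mathbb F(\mathfrak h_{\mathcal A})w_\lambda$, exactly as the paper does with $\pi\mapsto\tilde u_\pi$. You merely spell out the two mutually inverse maps more explicitly than the paper's compressed argument, which is a presentational difference, not a mathematical one.
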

\proof From the $(U_\mathbb F(\mathfrak g_{\mathcal A}),{\mathcal A}_\mathbb F^\lambda)$-bimodule structure of $\pmb W_\mathbb F(\lambda)$, we can see that $w_\lambda\pmb a$ satisfies the defining relations of $\pmb W_\mathbb F(\lambda)$ which yields the first statement of the proposition. For the second, let $\pi:\pmb W_\mathbb F(\lambda)\to \pmb W_\mathbb F(\lambda)$ be a nonzero $U_\mathbb F(\mathfrak g_{\mathcal A})$-module map. Since $\pmb W_\mathbb F(\lambda)_\lambda = U_\mathbb F(\mathfrak h_{\mathcal A})w_\lambda$, there exists $u_\pi\in U(\mathfrak h_{\mathcal A})$ such that $\pi(w_\lambda)= u_\pi w_\lambda$. Since $\pi$ is nonzero, the image $\tilde u_\pi$ of $u_\pi$ in ${\mathcal A}_\mathbb F^\lambda$ is nonzero. Thus, we obtain a well-defined map $\operatorname{Hom}_{U_\mathbb F(\mathfrak g_{\mathcal A})}(\pmb W_\mathbb F(\lambda),\pmb W_\mathbb F(\lambda))\to {\mathcal A}_\mathbb F^\lambda$ given by $\pi\mapsto \tilde u_\pi$, which is an isomorphism of right ${\mathcal A}_\mathbb F^\lambda$-modules.
\endproof

	\

\subsection*{Future projects} on this subject could include the decomposition of the global Weyl modules as $\mathfrak g$-modules, the freeness of global Weyl modules as $\mathcal A_\mathbb F^\lambda$-modules, the structure of the Weyl modules for fundamental weights, the independence of the choice of the irreducible module for $\mathcal A_\mathbb F^\lambda$ to construct the local Weyl modules, the structure of the algebras $\mathcal A_\mathbb F^\lambda$ and their irreducible representations, in a similar fashion to \cite[Sections 6 and 7]{CFK}, which was inspired by \cite{C1,L,R}.

\end{document}